\documentclass{article}
\usepackage[utf8]{inputenc}
\usepackage{amsmath}
\usepackage{amsfonts}
\usepackage{amsthm}
\usepackage{amssymb}

\usepackage{centernot}
\usepackage{url}

\def\mod{\bmod}

\def\intZ{ \mathbb{Z} }
\def\ratQ{ \mathbb{Q} }  
 
\def\natN{  \mathbb{N} }

\numberwithin{equation}{section}

\newtheorem{Theorem}{Theorem}[section]

\newtheorem{Proposition}[Theorem]{Proposition}
\newtheorem{Lemma}[Theorem]{Lemma}
\newtheorem{Observation}{Observation}
\newtheorem{Conjecture}[Theorem]{Conjecture}
\newtheorem*{conjecture*}{Conjecture}

\def\Legendre#1#2{\left( \frac{#1}{#2} \right)}

\newcommand*\conj[1]{\overline{#1}}

\def\rad{{\rm rad}}

\begin{document}

On a conjecture concerning the number of solutions to $a^x+b^y=c^z$, II  
 
     
5 July 2023 

\bigskip

Maohua Le  

Reese Scott  

Robert Styer  

\

\begin{abstract}
Let $a$, $b$, $c$ be distinct primes with $a<b$.  Let $S(a,b,c)$ denote the number of positive integer solutions $(x,y,z)$ of the equation $a^x + b^y = c^z$.  In a previous paper \cite{LeSt} it was shown that if $(a,b,c)$ is a triple of distinct primes for which $S(a,b,c)>1$ and $(a,b,c)$ is not one of the six known such triples then $(a,b,c)$ must be one of three cases.  In the present paper, we eliminate two of these cases (using the special properties of certain continued fractions for one of these cases, and using a result of Dirichlet on quartic residues for the other).  Then we show that the single remaining case requires severe restrictions, including the following: $a=2$, $b \equiv 1 \bmod 48$, $c \equiv 17 \mod 48$, $b > 10^9$, $c > 10^{18}$; at least one of the multiplicative orders $u_c(b)$ or $u_b(c)$ must be odd (where $u_p(n)$ is the least integer $t$ such that $n^t \equiv 1 \bmod p$); 2 must be an octic residue modulo $c$ except for one specific case; $2 \mid v_2(b-1) \le v_2(c-1)$ (where $v_2(n)$ satisfies $2^{v_2(n)} \parallel n$); there must be exactly two solutions $(x_1, y_1, z_1)$ and $(x_2, y_2, z_2)$ with $1 = z_1 < z_2$ and either $x_1 \ge 28$ or $x_2 \ge 88$.  These results support a conjecture put forward in \cite{ScSt6} and improve results in \cite{LeSt}.  
\end{abstract}

2020 Mathematics Subject Classification 11D61

Keywords: ternary purely exponential Diophantine equation, upper bound for number of solutions.

\section{Introduction}  

Let $\mathbb{P}$ be the set of positive rational prime numbers.  We consider $S(a,b,c)$, the number of solutions in positive integers $(x,y,z)$ to the equation
$$ a^x + b^y = c^z,   a, b, c \in \mathbb{P}, a<b.  \eqno{(1.1)}$$
This paper will continue the discussion of a conjecture on (1.1) found in \cite[Conjecture 1.7]{LeSt}:

\begin{conjecture*} 
For $a$, $b$, and $c$ distinct primes with $a<b$, we have $S(a,b,c) \le 1$, except for 

(i)  $S(2,3,5) = 2$, $(x,y,z) = (1,1,1)$ and $(4,2,2)$.  

(ii)  $S(2, 3, 11)=2$, $(x,y,z)=(1,2,1)$ and $(3,1,1)$.

(iii)  $S(2, 5, 3)=2$, $(x,y,z)=(1,2,3)$ and $(2,1,2)$.

(iv)  $S(2, 7, 3)=2$, $(x,y,z)=(1,1,2)$ and $(5,2,4)$.

(v)  $S(3,5,2)=3$, $(x,y,z)=(1,1,3)$, $(1,3,7)$, and $(3,1,5)$.

(vi)  $S(3,13,2)=2$, $(x,y,z)=(1,1,4)$ and $(5,1,8)$.
\end{conjecture*}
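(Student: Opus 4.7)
The plan is to leverage the reduction already obtained in \cite{LeSt}, which narrows the possible counterexamples to the conjecture down to the six known exceptional triples plus three residual families of candidate triples $(a,b,c)$. Any proof attempt therefore reduces to addressing these three families; my strategy is to eliminate two of them outright by exploiting specific arithmetic structure, and then to corner the third until the surviving region is, if not empty, at least extraordinarily restricted.

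For the first residual family, having two solutions $(x_1,y_1,z_1)$ and $(x_2,y_2,z_2)$ of $a^x+b^y=c^z$ should force a sharp Diophantine approximation relating $\log b$ and $\log c$, or equivalently demand that certain convergents in a specific continued fraction expansion be unusually good. I would write out that expansion explicitly, determine its partial quotients from the algebraic relations forced by the equation, and show that the required approximation quality fails. For the second family, the plan is to apply Dirichlet's classical determination of the biquadratic residue symbol $\left( 2 / p \right)_{4}$ for primes $p = A^2 + B^2$: with the congruence conditions that two simultaneous solutions impose on $b$ and $c$, this should rule out the divisibility $a^x + b^y \equiv 0 \pmod{c}$ holding for both hypothetical solutions.

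For the third and hardest family, where one expects $a=2$, I would layer a sequence of modular and order-theoretic obstructions. Working modulo $3$, $16$, and then $48$, and combining these with the parity of the $y_i$ and $z_i$, should pin down $b \equiv 1 \pmod{48}$, $c \equiv 17 \pmod{48}$, and $v_2(b-1) \le v_2(c-1)$. Analyzing the multiplicative orders $u_c(b)$ and $u_b(c)$ should force at least one to be odd, which in turn lifts to a higher-power residue condition on $2$ modulo $c$ (the octic residue statement). Bang--Zsygmondy-style primitive prime divisor results, combined with a direct computer search over small $b$ and $c$, should yield the lower bounds $b > 10^{9}$ and $c > 10^{18}$, and a careful inspection of the joint structure of the two solutions should yield the inequality $x_1 \ge 28$ or $x_2 \ge 88$.

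The main obstacle will be closing the third case entirely. Each of the restrictions above removes infinitely many candidates, but their intersection appears to remain nonempty under currently available tools. To finish the conjecture one would need either a sharp explicit lower bound for a linear form in three $p$-adic logarithms strong enough to clash with the size restrictions already forced, or a genuinely new reciprocity obstruction custom-made for the family $(2,b,c)$. Absent such an input, the realistic outcome of this plan is a very substantial reduction rather than a complete proof; but that reduction is already enough to secure the restrictive list of necessary conditions announced in the abstract.
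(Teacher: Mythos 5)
The statement you were asked to prove is the paper's open conjecture: the paper itself offers no proof of it, only partial progress (Theorems 1.3 and 1.4), so no proposal could be judged a complete proof here. Your outline correctly recognizes this and follows essentially the same route the paper actually takes for its partial results — eliminating one residual case via the special continued fraction of $\sqrt{q}$ (though the paper's mechanism is pinning down the fundamental solution and the representations $X^2-Y^2q=\pm p^{Z_1}$ to force $\left(\frac{2}{p}\right)=1$, contradicting $p\equiv 5\bmod 8$, rather than a failure of approximation quality), another via Dirichlet's quartic-residue criterion for $2$, and then stacking congruence, $v_2$, and multiplicative-order restrictions on the surviving case $(2,p,q)$. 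The only substantive divergence is minor: the bounds $b>10^9$, $c>10^{18}$ are imported from the earlier paper \cite{LeSt} rather than derived here from primitive-divisor arguments, and the inequalities $x_1\ge 28$ or $x_2\ge 88$ come from Bennett's gap principle (Lemma 2.2) combined with those bounds; your honest conclusion that the conjecture itself remains out of reach matches the paper's own position.
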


There is much previous work on various types of exponential Diophantine equations with prime bases (see, for example, \cite{Alex}, \cite{Be1}, \cite{BrennerFoster}, \cite{Hadano}, \cite{Herschfeld}, \cite{Le1}, \cite{MT}, \cite{N2}, \cite{Sc}, \cite{Sc2}, \cite{ScSt1}, \cite{Uchiyama}, \cite{deW}).  Most such work deals with the familiar Pillai equation $c^z-b^y = a$, taking $c$ and $b$ prime.  In 1985 the first author \cite{Le1} obtained some early results on (1.1) and conjectured that (1.1) has at most one solution in positive integers $(x,y,z)$ with $\min(x,y,z)>1$.   This conjecture is restated in \cite{Le2} and proven in the introduction to \cite{ScSt2}; it is also included in Theorem 1.1 below.  

In \cite{ScSt1} it is shown that the more general equation
$$ (-1)^u p^x + (-1)^v q^y = r^z,  p,q,r \in \mathbb{P}, (p,q,r) \ne (2,2,2), x,y,z, \in \intZ^+, u,v \in \{ 0,1\} $$
has at most two solutions $(x,y,z,u,v)$ except when $(p,q,r)$ is a permutation of one of the following: $(5,3,2)$ which has seven solutions, $(7,3,2)$ which has four solutions, $(11,3,2)$ which has three solutions, $(13,3,2)$ which has three solutions.  But improving this to at most one solution (with listed exceptions) has not been accomplished, even when $(u,v)$ is fixed at $(0,0)$.  

A more recent result is the following, easily derived from \cite[Lemma 1.2, Theorem 1.4, Theorem 1.5, Corollary 1.6, and Theorem 1.8]{LeSt}:

\begin{Theorem} 
If $(a,b,c) = (2,3,5)$, $(2,3,11)$, $(2,5,3)$, $(2,7,3)$, $(3,5,2)$ or $(3,13,2)$, Equation (1.1) has only the solutions given in the above conjecture.  Except for these six cases, if Equation (1.1) has more than one solution, we must have $(a,b,c) = (2,p,q)$ for some odd primes $p>10^9$ and $q>10^{18}$, and there must be exactly two solutions $(x_1,y_1,z_1)$ and $(x_2,y_2,z_2)$ as follows:
$$ 2^{x_1} + p^{y_1} = q, 2 \mid x_1, 2 \mid y_1,  \eqno{(1.2)}$$
 and 
$$2^{x_2} + p^{y_2} = q^{z_2}, 2 \mid x_2, 2 \nmid y_2, 2 \nmid z_2 > 1. \eqno{(1.3)}$$
\end{Theorem}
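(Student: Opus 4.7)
The plan is to assemble Theorem 1.1 as a compilation of five already-established results from \cite{LeSt}, with no new mathematics needed beyond careful bookkeeping. Given how the theorem is introduced in the excerpt (``easily derived from \cite[Lemma 1.2, Theorem 1.4, Theorem 1.5, Corollary 1.6, and Theorem 1.8]{LeSt}''), the task reduces to lining up the hypothesis of each cited statement with the hypotheses of Theorem 1.1 and then concatenating the conclusions.

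First I would dispose of the six named triples $(2,3,5), (2,3,11), (2,5,3), (2,7,3), (3,5,2), (3,13,2)$ by appealing to \cite[Lemma 1.2]{LeSt}, which should enumerate every positive-integer solution of (1.1) for each of these triples and confirm agreement with the conjecture. For every other triple of distinct primes $(a,b,c)$ with $a<b$ and $S(a,b,c)>1$, I would then invoke \cite[Theorem 1.4]{LeSt} and \cite[Theorem 1.5]{LeSt} in sequence to force the structural conclusion that $a=2$ and $(a,b,c)=(2,p,q)$ for odd primes $p,q$, and \cite[Corollary 1.6]{LeSt} to extract the explicit lower bounds $p>10^{9}$ and $q>10^{18}$.

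Finally I would apply \cite[Theorem 1.8]{LeSt} to the remaining configuration $(2,p,q)$ to conclude that in fact there are exactly two solutions $(x_1,y_1,z_1)$ and $(x_2,y_2,z_2)$, with $z_1=1<z_2$, and with the parity conditions on $x_i,y_i,z_i$ recorded in (1.2) and (1.3). Combining the previous normalization $(a,b,c)=(2,p,q)$ with these parity data yields precisely the two displayed equations.

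The main (and essentially only) obstacle is the clerical one of matching conventions: one must check that the ordering hypothesis $a<b$ of (1.1) is consistent with whatever ordering is assumed in each cited statement, that the restriction to positive integers $x,y,z$ matches, and that the parity assertions in \cite[Theorem 1.8]{LeSt} are stated with $z_1<z_2$ rather than the opposite ordering. Once these are checked, the derivation is immediate and there is no residual analytic content to supply.
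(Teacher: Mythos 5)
Your proposal is essentially identical to what the paper does: Theorem 1.1 is stated as ``easily derived'' from the five cited results of \cite{LeSt}, and no further proof is supplied beyond that assembly, so your bookkeeping plan is the intended derivation. One minor attribution quibble: judging from Lemma 2.5 and the proof of Theorem 1.4 in this paper, the bounds $p>10^9$, $q>10^{18}$ come from Theorem 1.8 of \cite{LeSt} and the condition $z_2>1$ from Corollary 1.6 (roughly the reverse of your assignment), but this does not affect the correctness of the argument.
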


(1.3) follows from $p \equiv 1 \bmod 3$, shown in \cite{LeSt} using a result of Bennett \cite[Theorem 1.1]{Be2}, and a result of Bauer and Bennett \cite[Corollary 1.7]{BB}.  Using $p \equiv 1 \bmod 3$ leads to the following theorem (Theorem 1.5 of \cite{LeSt}): 

\begin{Theorem} 
If (1.1) has more than one solution and is not one of the six exceptional cases of Theorem 1.1, then we must have $(a,b,c) = (2,p,q)$ for some odd primes $p$ and $q$ satisfying one of the following conditions:

$p \equiv 13 \bmod 24$, $q \equiv 5  \bmod 24$,

$p \equiv 13 \bmod 24$, $q \equiv 17 \bmod 24$,

$p \equiv 1 \bmod 24$, $q \equiv  17 \bmod 24$.

\end{Theorem}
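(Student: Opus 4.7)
The plan is to reduce Equations (1.2) and (1.3) modulo $3$ and modulo $8$ using the parity information provided, combined with the established fact $p \equiv 1 \pmod 3$. This yields four candidate pairs $(p, q) \pmod{24}$; three of them match the three cases in the theorem, and the fourth must be ruled out.

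Reducing (1.2) modulo $3$: since $p \equiv 1 \pmod 3$ and $x_1, y_1$ are even, $2^{x_1} \equiv p^{y_1} \equiv 1 \pmod 3$, so $q \equiv 2 \pmod 3$. Reducing (1.2) modulo $8$: using $p^2 \equiv 1 \pmod 8$ for odd $p$ and $y_1$ even, $p^{y_1} \equiv 1 \pmod 8$, so $q \equiv 2^{x_1} + 1 \pmod 8$, which equals $5$ when $x_1 = 2$ and $1$ when $x_1 \geq 4$. Reducing (1.3) modulo $8$: since $y_2, z_2$ are odd, $p^{y_2} \equiv p$ and $q^{z_2} \equiv q \pmod 8$, so $p \equiv q - 2^{x_2} \pmod 8$. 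The four sub-cases according to whether $x_1 = 2$ or $x_1 \geq 4$ and whether $x_2 = 2$ or $x_2 \geq 4$ give $(p, q) \pmod 8$ equal to $(1, 5), (5, 5), (5, 1), (1, 1)$ respectively, which lift via CRT with $q \equiv 2 \pmod 3$ and $p \equiv 1 \pmod 3$ to $(p, q) \equiv (1, 5), (13, 5), (13, 17), (1, 17) \pmod{24}$. The latter three are exactly the conclusions of the theorem, so only $(p, q) \equiv (1, 5) \pmod{24}$ (corresponding to $x_1 = x_2 = 2$) remains to be excluded.

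In this remaining case, (1.2) and (1.3) become $q = p^{y_1} + 4$ and $q^{z_2} = p^{y_2} + 4$; substituting yields $(p^{y_1} + 4)^{z_2} = p^{y_2} + 4$. Since every term in the binomial expansion of $(p^{y_1} + 4)^{z_2}$ beyond the leading $p^{y_1 z_2}$ is strictly positive, $p^{y_2} > p^{y_1 z_2}$, forcing the integer inequality $y_2 \geq y_1 z_2 + 1$. Taking logarithms gives
\[
y_2 - y_1 z_2 = \frac{z_2 \log(1 + 4/p^{y_1}) + \log(1 - 4/q^{z_2})}{\log p},
\]
and the right-hand side is bounded above by $4z_2/(p^{y_1}\log p)$. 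For $p > 10^9$ and $y_1 \geq 2$ this upper bound is strictly less than $1$ provided $z_2 < p^{y_1} \log p / 4$, contradicting $y_2 - y_1 z_2 \geq 1$.

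The main obstacle is the complementary regime $z_2 \geq p^{y_1} \log p / 4$, which forces $z_2$ to be astronomically large (at least roughly $5 \cdot 10^{18}$ given the size assumptions). To close this case one would appeal to stronger tools, most plausibly Bauer--Bennett's Corollary 1.7 of \cite{BB} (bounding the number of solutions to Pillai-type equations $q^z - p^y = 4$, thereby ruling out having both $(y_1, 1)$ and $(y_2, z_2)$ as solutions once $z_2$ is that large) or a biquadratic-residue argument exploiting the representation $q = (p^{y_1/2})^2 + 2^2$, where $p^{y_1/2}$ is odd and the even part equals $2 \not\equiv 0 \pmod 8$, so that by Gauss's criterion $2$ fails to be a biquadratic residue modulo $q$, yielding a congruence obstruction incompatible with the residue character forced on $p$ by (1.3).
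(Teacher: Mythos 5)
Your reduction to four residue classes is correct and is the natural route: reducing (1.2) mod $3$ (using the known fact $p\equiv 1\bmod 3$) gives $q\equiv 2\bmod 3$, and the mod $8$ analysis of (1.2) and (1.3) according to whether $x_1=2$ or $x_1\ge 4$ and $x_2=2$ or $x_2\ge 4$ correctly yields $(p,q)\equiv(1,5),(13,5),(13,17),(1,17)\bmod{24}$, so that only $(1,5)$ (equivalently $x_1=x_2=2$) needs to be excluded. The paper itself does not reprove this theorem (it quotes Theorem 1.5 of \cite{LeSt}), so the comparison is really about whether you close that last case. You do not: your binomial/logarithm estimate only shows that $x_1=x_2=2$ forces $z_2>p^{y_1}\log p/4$, and since no a priori upper bound on $z_2$ is available, the case survives. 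You explicitly concede this (``to close this case one would appeal to stronger tools''), so as written the proof has a genuine gap.

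The frustrating part is that the missing step is Lemma 2.1 of this very paper (Theorem 6 of \cite{ScSt1}): for fixed $k$, the equation $q^n-p^m=2^k$ has at most one solution $(m,n)$ in positive integers. With $x_1=x_2=2$, equations (1.2) and (1.3) give two distinct solutions $(y_1,1)$ and $(y_2,z_2)$ of $q^n-p^m=4$, an immediate contradiction with no size hypothesis whatsoever; indeed the same lemma kills $x_1=x_2$ for any common value, and the present paper uses it in exactly this way in the proof of Theorem 1.3(i). You gesture at this family of results (via Bauer--Bennett) but only as a conditional possibility ``once $z_2$ is that large,'' which misstates how the tool applies. Your alternative suggestion via biquadratic residues does not work either: Gauss's criterion for the biquadratic character of $2$ concerns primes $q\equiv 1\bmod 8$, whereas here $q\equiv 5\bmod 8$, so $2$ is already a quadratic nonresidue and no representation $q=a^2+64b^2$ exists to reason about. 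Replace the final paragraph with a direct appeal to Lemma 2.1 and the proof is complete.
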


The purpose of this paper is to first eliminate two of the cases in Theorem 1.2 and then show that the single remaining case implies severe restrictions on $p$ and $q$.  We use the following notation: 

If $2^t \parallel n$, we write $v_2(n) = t$.

If $t$ is the least positive integer such that $n^t \equiv 1 \bmod p$ for some prime $p$, we write $u_p(n) = t$.  

We prove the following improvement on Theorem 1.2:

\begin{Theorem}  
If (1.1) has more than one solution and is not one of the six exceptional cases of Theorem 1.1, then we must have $(a,b,c) = (2,p,q)$ for some odd primes $p$ and $q$ satisfying all of the following conditions:

(i):  $p \equiv 1 \bmod 48$, $q \equiv 17 \bmod 48$,  $2 \mid v_2(p-1) \le v_2(q-1)$.   

(ii):  At least one of the multiplicative orders $u_p(q)$ and $u_q(p)$ must be odd.  

(iii):  2 is an octic residue modulo $q$, that is, $2$ is congruent to an eighth power modulo $q$, except when $v_2(p-1) = v_2(q-1) = 4$.  
\end{Theorem}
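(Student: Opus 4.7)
The plan is to start from the three residue cases in Theorem 1.2 and proceed as follows: eliminate the two cases with $p \equiv 13 \pmod{24}$, tighten $(p, q)$ from mod 24 to mod 48 in the surviving case, and derive the remaining conditions. The two solutions have the form (1.2), $q = 2^{x_1} + p^{y_1}$ with $x_1, y_1$ both even, and (1.3), $q^{z_2} = 2^{x_2} + p^{y_2}$ with $x_2$ even and $y_2, z_2$ odd, which is the source of all the arithmetic constraints I would extract.

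To eliminate $(p, q) \equiv (13, 5) \pmod{24}$, I would first reduce (1.2) modulo 8: since $p^{y_1}$ is an odd square, $p^{y_1} \equiv 1 \pmod{8}$, and $q \equiv 5 \pmod{8}$ forces $x_1 = 2$, giving the tight algebraic constraint $q = 4 + p^{y_1}$. I would then bring in the theory of continued fractions of $\log q / \log 2$: the large solution (1.3) forces $x_2/z_2$ to be an exceptionally good rational approximation to $\log q / \log 2$, hence essentially a convergent of the expansion, and the constraint $q - 4 = p^{y_1}$ with $p$ prime restricts the admissible convergents enough to close the case, together with a finite direct check of small exponents. To eliminate $(p, q) \equiv (13, 17) \pmod{24}$, I would write $q = (2^{x_1/2})^2 + (p^{y_1/2})^2$ from (1.2); when $x_1 \ge 6$ this becomes $q = A^2 + 64 B^2$, so Dirichlet's quartic residue criterion implies $2$ is a biquadratic residue modulo $q$. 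Reducing (1.3) modulo $q$ gives $2^{x_2} \equiv -p^{y_2} \pmod{q}$, and computing the biquadratic symbol of $-p$ (using $p \equiv 5 \pmod{8}$ and $y_2$ odd) contradicts the biquadratic character of $2^{x_2}$. The small cases $x_1 \in \{2, 4\}$ would be handled by reducing (1.2) modulo small powers of $2$.

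In the surviving case $(p, q) \equiv (1, 17) \pmod{24}$, I would refine $p \equiv 1 \pmod{16}$ and $q \equiv 1 \pmod{16}$ by reducing (1.2) and (1.3) modulo $16$, using the parities of the exponents and $p \equiv 1 \pmod{8}$; small exponent values are excluded by the size bounds $p > 10^9$, $q > 10^{18}$ from Theorem 1.1. This yields (i), together with $2 \mid v_2(p-1) \le v_2(q-1)$ obtained by careful $2$-adic valuation tracking in (1.2) and (1.3). For (ii), reducing (1.3) modulo $q$ shows that $-1$ lies in the subgroup of $(\mathbb{Z}/q)^*$ generated by $2$ and $p$, and symmetrically from (1.2) modulo $p$; comparing the $2$-adic valuations of the orders involved forces at least one of $u_p(q)$, $u_q(p)$ to be odd. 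For (iii), I would extend the Dirichlet argument: when $x_1 \ge 8$ the representation $q = A^2 + 256 B^2$ holds, and the analogous supplementary formula for the octic residue character implies $2$ is an $8$th-power residue modulo $q$; the exceptional case is exactly $x_1 \in \{4, 6\}$, which under the $v_2$ analysis translates to $v_2(p-1) = v_2(q-1) = 4$.

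The main obstacle is the continued-fractions step in Phase 1, where combining Baker-type approximation bounds with the algebraic constraint $q - 4 = p^{y_1}$ requires a delicate enumeration of admissible convergents together with computational verification; a close second is the octic-residue calculation in Phase 3, where explicit higher-power supplementary laws and careful handling of the boundary cases $x_1 \in \{4, 6\}$ are needed to make the exception in (iii) precise.
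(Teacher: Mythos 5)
Your overall skeleton matches the paper's (eliminate the two cases with $p\equiv 13 \bmod 24$, then refine the surviving case), and your Phase 3 outline --- $2$-adic valuation tracking for (i), order-parity comparison for (ii), and the $q=a^2+256b^2$ criterion of Reuschle--Western for (iii) --- is essentially what the paper does, modulo the detail that the boundary case $x_1=6$ with $v_2(p-1)=v_2(q-1)=6$ must be eliminated separately (the paper does this with a $w_q$ computation) rather than folded into the stated exception. But Phase 1 contains a genuine gap. Equation (1.3) does \emph{not} force $x_2/z_2$ to be a good rational approximation to $\log q/\log 2$: the term $p^{y_2}$ can be of the same order of magnitude as $q^{z_2}$, so $|x_2\log 2 - z_2\log q|$ need not be small and no convergent structure is available. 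Moreover, a ``finite direct check of small exponents'' cannot close the case, since $p$, $y_1$ range over infinitely many values. The paper's actual mechanism is entirely different: from $x_1=2$ one gets $q=p^{y_1}+4$, and the continued fraction of $\sqrt{q}=\sqrt{p^{2n}+4}$ has the explicit short period $[p^n,\overline{(p^n-1)/2,1,1,(p^n-1)/2,2p^n}]$. This pins down the fundamental solution of $u^2-qv^2=-1$ (whose $u$-coordinate is divisible by $p$) and shows the minimal exponent $Z_1$ with $X^2-qY^2=\pm p^{Z_1}$ solvable is $y_1/2$. Writing $2^{x_2/2}+q^{(z_2-1)/2}\sqrt q$ as $\theta^t\delta$ in $\mathbb{Q}(\sqrt q)$ and reducing modulo $p$ yields $2^{z_2-1}\equiv\pm 2^{t-2}\bmod p$ with $z_2$, $t$ odd, forcing $2$ to be a quadratic residue of $p$ and contradicting $p\equiv 5\bmod 8$.

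A secondary problem is in your Phase 2: the quartic residue character of $-p^{y_2}$ modulo $q$ is not determined by $p\equiv 5\bmod 8$ and the parity of $y_2$; quartic characters modulo $q$ depend on the pair $(p,q)$ jointly, so reducing (1.3) alone modulo $q$ cannot produce the contradiction. The paper extracts the needed information about the character of $p$ modulo $q$ from equation (1.2) as well, via the bookkeeping function $w_q(\cdot)=\min(v_2(i_q(\cdot)),v_2(q-1))$ and a lemma asserting $w_q(2^{x_1})=v_2((q-1)/2)$ whenever $v_2(x_1)=v_2(x_2)$; the contradiction is then $2=w_q(2^{x_1})=3$, where the value $3$ comes from Dirichlet's criterion applied to $q=p^{y_1}+2^{x_1}$ with $x_1\ge 6$. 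Your instinct to invoke Dirichlet here is right, but the argument needs both solutions reduced modulo $q$ simultaneously, not just (1.3).
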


A further restriction is given by the following: 

\begin{Theorem} 
In Equations (1.2) and (1.3),  the following must hold:

(i): Either $x_1 \ge 28$ or $x_2 \ge 88$.  

(ii):  If $27 \ge v_2(p-1) = v_2(q-1)$, then $x_2 \ge 88$; and if $87 \ge v_2(p-1)$ and $v_2(p-1) <  v_2(q-1)$, then $ x_1 \ge 28$. 

\end{Theorem}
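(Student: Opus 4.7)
The plan is a 2-adic analysis of equations (1.2) and (1.3), exploiting the parity and residue information of Theorem 1.3. Set $a := v_2(p-1)$ and $b := v_2(q-1)$; by Theorem 1.3 both are $\ge 4$, $a$ is even, and $a \le b$. Since $y_2, z_2$ are odd and $p, q \equiv 1 \bmod 4$, the lifting-the-exponent lemma gives $v_2(p^{y_2}-1) = a$, $v_2(q^{z_2}-1) = b$, and $v_2(p^{y_1}-1) = a + v_2(y_1)$. Comparing valuations in $q^{z_2}-1 = 2^{x_2}+(p^{y_2}-1)$ splits the analysis into Case I ($b = a$, forcing $x_1 = a$ and $x_2 > a$) and Case II ($b > a$, forcing $x_2 = a$). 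In Case II, comparing valuations in $q-1 = 2^{x_1}+(p^{y_1}-1)$ and using the parity of $x_1$ and $a$ gives $x_1 \ge a+2$. Hence part (i) of the theorem follows from part (ii) by a bootstrap: in Case I with $a \ge 28$ one already has $x_1 \ge 28$, in Case II with $a \ge 88$ one already has $x_2 \ge 88$, and in Case II with $a \ge 26$ one already has $x_1 \ge 28$. The nontrivial work thus reduces to proving (ii) in the ranges $a \in \{4, 6, \ldots, 26\}$ (Case I) and $a \in \{4, 6, \ldots, 24\}$ (Case II).

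The core step is to promote the leading-order valuation match to higher 2-adic precision. Writing $p = 1 + 2^a v$ and $q = 1 + 2^b u$ with odd $u, v$, and expanding $p^{y_i}$ and $q^{z_2}$ by the binomial series, in Case I the leading match of (1.2) yields $u \equiv 1 + y_1 v \pmod{2^a}$, while (1.3) gives $2^{x_2} \equiv 2^a(z_2 u - y_2 v) \pmod{2^{2a}}$. Substituting the former into the latter and pushing through successive orders $2^{a+1}, 2^{a+2}, \ldots$ produces a chain of congruences on the 2-adic integer $z_2 u - y_2 v$; combined with the restrictions of Theorem 1.3 (the residue classes modulo $48$, the parity of $u_p(q)$ or $u_q(p)$, and especially the octic-residue condition on $2$ modulo $q$), these force $x_2 \ge 88$ whenever $a \le 26$. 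Case II is handled by the symmetric iteration, with (1.3) pinning $q$ to leading order and (1.2) furnishing the refined chain, to deduce $x_1 \ge 28$ whenever $a \le 24$.

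The main obstacle is the combinatorial control of the higher binomial coefficients $\binom{z_2}{k}$ and $\binom{y_i}{k}$, whose 2-adic valuations depend on the unknown 2-adic digits of $z_2 - 1$ and $y_i - 1$. At each successive level of the iteration these coefficients must be bracketed using the parity data and the octic-residue hypothesis; the latter is essential to eliminate spurious small-$x_2$ (respectively small-$x_1$) branches that would otherwise survive. The numerical thresholds $28$ and $88$ in the theorem correspond to the levels at which the accumulated 2-adic constraints become unsatisfiable.
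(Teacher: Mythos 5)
There is a genuine gap at the heart of the proposal. Your preliminary valuation analysis is correct and matches the paper's (5.3)--(5.4): if $v_2(p-1)=v_2(q-1)$ then $x_1=v_2(p-1)$, and if $v_2(p-1)<v_2(q-1)$ then $x_2=v_2(p-1)$ (and the inequality $v_2(p-1)\le v_2(q-1)$ itself comes from Lemma 2.1). But the central claim --- that ``pushing through successive orders'' of 2-adic congruences, aided by the octic-residue condition, forces $x_2\ge 88$ when $v_2(p-1)\le 26$ and $x_1\ge 28$ when $v_2(p-1)\le 24$ --- is never established, and you yourself identify the fatal obstacle: the 2-adic valuations of the binomial coefficients $\binom{y_i}{k}$, $\binom{z_2}{k}$ depend on the unknown 2-adic digits of $y_i-1$ and $z_2-1$, and no mechanism is offered to control them. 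Nothing in a purely local (mod $2^N$) analysis of these equations rules out a small $x_2$ with all congruences satisfied by a suitable choice of those digits, so the ``chain becomes unsatisfiable at level 88'' assertion is a hope, not a proof. Your reading of the thresholds is also wrong: $28$ and $88$ are not 2-adic levels but archimedean bounds, namely $2^{27}<10^9/4<q^{1/2}/4$ and $2^{87}<10^{27}/4\le q^{z_2/2}/4$, coming from $q>10^{18}$ (Lemma 2.5) and $z_2\ge 3$ (Corollary 1.6 of \cite{LeSt}).

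The missing idea is Bennett's gap principle (Lemma 2.2): at most one pair $(z,y)$ can satisfy $0<|q^z-p^y|<\frac14\max\{q^{z/2},p^{y/2}\}$. Since $(1,y_1)$ and $(z_2,y_2)$ give two such pairs with differences $2^{x_1}$ and $2^{x_2}$, at least one of $2^{x_1}\ge \frac14 q^{1/2}>2^{27}$ or $2^{x_2}\ge\frac14 q^{z_2/2}>2^{87}$ must hold, which is exactly part (i). Part (ii) then follows from (i) together with your (correct) identities $x_1=v_2(q-1)$ resp.\ $x_2=v_2(p-1)$ in the two cases --- note the logical direction is the reverse of your plan, which tried to deduce (i) from (ii); in the paper (ii) is the easy corollary and (i) carries the content.
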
  

Theorems 1.3 and 1.4 combine with Theorem 1.1 to give information on cases (other than the six known cases) in which (1.1) might have more than one solution.  We hope this new information might eventually lead to a proof of the above conjecture.   

This conjecture is a special case of the following conjecture given in \cite{ScSt6}:

\begin{Conjecture}  
Let $N(a,b,c)$ be the number of solutions in positive integers $(x,y,z)$ to the equation 
$$a^x + b^y = c^z, a,b,c \in \intZ^+, b>a>1, \gcd(a,b) = 1, \eqno{(1.4)}$$
with $a$, $b$, $c$ not perfect powers.  
 
If $N(a,b,c)>1$, then $(a,b,c)$ must be one of the following:

(i)  $N(2, 2^r-1, 2^r+1)=2$, $(x,y,z)=(1,1,1)$ and $(r+2, 2, 2)$, where $r$ is a positive integer with $r \ge 2$, $r \ne 3$.  

(ii)  $N(2, 3, 11)=2$, $(x,y,z)=(1,2,1)$ and $(3,1,1)$.

(iii)  $N(2, 3, 35)=2$, $(x,y,z)=(3,3,1)$ and $(5,1,1)$.

(iv)  $N(2, 3, 259)=2$, $(x,y,z)=(4,5,1)$ and $(8,1,1)$.

(v)  $N(2, 5, 3)=2$, $(x,y,z)=(1,2,3)$ and $(2,1,2)$.

(vi)  $N(2, 5, 133)=2$, $(x,y,z)=(3,3,1)$ and $(7,1,1)$.

(vii)  $N(2, 7, 3)=2$, $(x,y,z)=(1,1,2)$ and $(5,2,4)$.

(viii)  $N(2, 89, 91)=2$, $(x,y,z)=(1,1,1)$ and $(13,1,2)$.

(ix)  $N(2, 91, 8283)=2$, $(x,y,z)=(1,2,1)$ and $(13,1,1)$.

(x)  $N(3,5,2)=3$, $(x,y,z)=(1,1,3)$, $(1,3,7)$, and $(3,1,5)$.

(xi)  $N(3,10,13)=2$, $(x,y,z)=(1,1,1)$ and $(7,1,3)$.

(xii)  $N(3,13,2)=2$, $(x,y,z)=(1,1,4)$ and $(5,1,8)$.

(xiii)  $N(3, 13, 2200)=2$, $(x,y,z)=(1,3,1)$ and $(7,1,1)$.

\end{Conjecture}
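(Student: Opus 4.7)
The plan is to stratify by the prime factorization structure of $a$, $b$, and $c$, reducing to the prime-base subcase (the unnumbered conjecture stated in the introduction) and then handling separately the situations where at least one of $b$, $c$ is composite. Within each stratum I would combine elementary modular constraints, primitive-divisor theorems for Lucas-type sequences, and lower bounds for linear forms in logarithms.

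First I would observe that when $a$, $b$, $c$ are all prime, the triple already falls under the prime-base conjecture, which recovers cases (ii), (v), (vii), (x), and (xii) of the present statement. Theorem 1.1 together with Theorems 1.3 and 1.4 settles every prime triple except the unresolved $(2,p,q)$ residue class with $p \equiv 1 \bmod 48$ and $q \equiv 17 \bmod 48$; finishing that residual case is the missing ingredient highlighted in the present paper and must be imported from elsewhere.

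Second I would treat the composite cases. Under the no-perfect-power hypothesis any composite member of $(a,b,c)$ has at least two distinct prime factors (or coprime prime-power exponents). The main reduction is to assume the smaller solution satisfies $z_1 = 1$, so that $c = a^{x_1} + b^{y_1}$, and to ask when $c^{z_2} = a^{x_2} + b^{y_2}$ admits a second solution with $z_2 \ge 2$. Expanding $(a^{x_1} + b^{y_1})^{z_2}$ modulo high powers of $a$ and of $b$ (exploiting $\gcd(a,b) = 1$), together with Mihailescu's theorem and the Bilu--Hanrot--Voutier theorem on primitive divisors, should force $z_2$ and the ratios $x_2/x_1$, $y_2/y_1$ into a finite range; a subsequent computer search then isolates the sporadic cases (iii), (iv), (vi), (viii), (ix), (xi), (xiii). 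The infinite family (i) is certified by the identity $(2^r-1)^2 + 2^{r+2} = (2^r+1)^2$, and further solutions for each admissible $r$ would be excluded via sharp Baker-type bounds on $z$.

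The main obstacle is twofold. The prime-base subcase is itself open: even after Theorems 1.3 and 1.4 the residual $(2,p,q)$ class remains, so any proof of the full conjecture inherits this gap and cannot be completed with currently available techniques on ternary purely exponential equations. More seriously, establishing \emph{completeness} of the composite exceptional list requires effective bounds sharper than standard linear-forms-in-logarithms estimates deliver, especially when $c$ has many prime factors and $b$ is large; uniformly excluding further sporadic composite triples, rather than verifying them case by case, would almost certainly require a genuinely new method, which is why the statement remains a conjecture rather than a theorem.
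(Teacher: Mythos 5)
The statement you were asked about is Conjecture 1.5, and the paper offers no proof of it: it is stated as an open conjecture (a generalization, due to \cite{ScSt6}, of the prime-base conjecture that the rest of the paper works toward). So there is no ``paper's own proof'' to compare against, and your submission is, by your own admission in its final paragraph, a roadmap rather than a proof. The two obstacles you name are real and correctly identified: the residual prime triple $(2,p,q)$ with $p \equiv 1 \bmod 48$, $q \equiv 17 \bmod 48$ is exactly what Theorems 1.3 and 1.4 constrain but do not eliminate, and completeness of the composite exceptional list is far beyond what the cited bounds (Gel'fond, Beukers--Schlickewei, Hu--Le, Miyazaki--Pink) deliver --- the best known unconditional results give $N(a,b,c) \le 2$ or $\le 3$ under various hypotheses, not $\le 1$ with a finite exception list.

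Two concrete cautions about the sketch itself. First, your reduction ``assume the smaller solution satisfies $z_1 = 1$'' is unjustified in the general setting of (1.4): cases (v), (vii), (x), and (xii) of the conjecture all have every solution with $z \ge 2$, so any stratification must handle pairs of solutions both having $z > 1$ (this is where the paper's machinery --- Lemma 2.1, Lemma 2.2, and the continued-fraction and quartic/octic residue arguments of Sections 3 and 4 --- does its work in the prime case, and nothing analogous is available for composite bases). Second, invoking Mihailescu and Bilu--Hanrot--Voutier ``should force $z_2$ into a finite range'' is not a step that is known to go through; the primitive-divisor theorem applies to Lucas and Lehmer sequences, and $a^{x}+b^{y}=c^{z}$ does not in general produce one. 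If it did go through uniformly, the conjecture would not be open. Your proposal is an honest assessment of the state of the art, but it should be graded as a discussion of the conjecture, not as a proof attempt that could be completed.
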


An effective upper bound for $N(a,b,c)$ was first given by A. O. Gel'fond \cite{G} (Mahler \cite{M} had earlier shown that the number of solutions was finite, using his $p$-adic analogue of the Diophantine approximation method of Thue-Siegel, but his method is ineffective).  A straightforward application of an upper bound on the number of solutions of binary $S$-unit equations due to F. Beukers and H. P. Schlickewei \cite{BSch} gives $N(a,b,c) \le 2^{36}$.  The following more accurate upper bounds for $N(a,b,c)$ have been obtained in recent years:

(i)  (R. Scott and R. Styer \cite{ScSt6})  If $ 2 \nmid c$ then $N(a,b,c) \le 2$.

(ii)  (Y. Z. Hu and M. H. Le \cite{HL1})  If $\max\{ a,b,c\} > 5 \cdot 10^{27}$, then $N(a,b,c) \le 3$.  

(iii)   (Y. Z. Hu and M. H. Le \cite{HL2})  If $2 \mid c$ and $\max\{ a,b,c\} > 10^{62}$, then $N(a,b,c) \le 2$.

(iv)  (T. Miyazaki and I. Pink \cite{MP})  If $2 \mid c$ and $\max\{a,b,c \} \le 10^{62}$, then $N(a,b,c) \le 2$ except for $N(3,5,2) = 3$.  

More recently, Miyazaki and Pink \cite{MP2} have begun work on improving $N(a,b,\allowbreak c) \allowbreak  \le 2$ to $N(a,b,\allowbreak c) \le 1$ under certain conditions, including some specific results such as $c \not\in \{ 2, 3, 5, 6, 17, 257, 65537 \}$ when $N(a,b,c)>1$ ($c \ne 6$ has not previously been shown even for the more specific Pillai equation mentioned above).  

Nevertheless, the problem of establishing $N(a,b,c) \le 1$ with a finite number of specified exceptions remains open.  This open question is addressed by the above Conjecture 1.5.  

In Sections 3 and 4 we show that the first two cases given in Theorem 1.2 are impossible, and then, in Section 5, we prove Theorems 1.3 and 1.4. In Section 6, we consider (1.3) in the context of the $abc$ conjecture.  

\section{Preliminary Lemmas}  

\begin{Lemma}[Theorem 6 of \cite{ScSt1}]  
Let $p$, $q$ be distinct odd positive primes.  For a given positive integer $k$, the equation
$$ q^n - p^m = 2^k, m, n \in \natN, $$
has at most one solution in positive integers $(m,n)$.  
\end{Lemma}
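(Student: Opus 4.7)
The plan is to argue by contradiction. Suppose $(m_1,n_1)$ and $(m_2,n_2)$ are two distinct positive integer solutions to $q^n - p^m = 2^k$. Order them with $n_1 \le n_2$; since the equation determines $p^m$ from $q^n$, equality of the $n$'s forces equality of the $m$'s, so in fact $n_1 < n_2$ and correspondingly $m_1 < m_2$. Subtracting the two instances yields the master identity
$$ q^{n_1}\bigl(q^{n_2-n_1} - 1\bigr) \;=\; p^{m_1}\bigl(p^{m_2-m_1} - 1\bigr). $$
Setting $s=n_2-n_1$ and $t=m_2-m_1$, coprimality of $p$ and $q$ gives $p^{m_1}\mid q^{s}-1$ and $q^{n_1}\mid p^{t}-1$, hence the size bounds $p^{m_1}<q^{s}$ and $q^{n_1}<p^{t}$, together with a common quotient $C\ge 1$ satisfying $q^{s}=p^{m_1}C+1$ and $p^{t}=q^{n_1}C+1$.

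Next I would convert these divisibilities into sharp valuation estimates via the lifting-the-exponent lemma for odd primes. Writing $d=u_p(q)$, the condition $p^{m_1}\mid q^{s}-1$ forces $d\mid s$ and bounds $m_1$ from above by the $p$-adic valuations of $q^{d}-1$ and of $s/d$; a symmetric bound for $n_1$ holds with $d'=u_q(p)$. At the same time, the difference of the two instances gives $v_2(q^{s}-1)=v_2(p^{t}-1)$, which after a short case split on the residues of $p,q$ modulo $4$ yields, via $2$-adic LTE, a rigid relation among $v_2(s)$, $v_2(t)$, $v_2(p\pm 1)$ and $v_2(q\pm 1)$. Combining this information with $k = v_2(q^{n_1}-p^{m_1})$ coming from the original equation, and plugging into the size inequalities $p^{m_1}<q^{s}$, $q^{n_1}<p^{t}$, should produce incompatible lower and upper bounds on the pair $(s,t)$, closing the argument.

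The main obstacle is the small-$k$ regime, particularly $k\in\{1,2\}$, where $2$-adic LTE contributes little or no information and the above machinery degenerates. Here the plan is to pass to the Gaussian integers: using $2=-i(1+i)^{2}$ one rewrites each solution as
$$ q^{n_i}-p^{m_i} \;=\; (-i)^{k}(1+i)^{2k}, $$
and then argues from the splitting behaviour of $p$ and $q$ in $\mathbb{Z}[i]$ (governed by their residues modulo $4$) that two distinct factorizations of the same Gaussian-integer right-hand side forcing this form simultaneously cannot occur. Together, the LTE-based argument for $k\ge 3$ and the Gaussian-integer argument for $k\in\{1,2\}$ would rule out two solutions and establish the lemma.
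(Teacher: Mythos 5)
The paper does not prove this lemma at all: it is quoted verbatim as Theorem 6 of \cite{ScSt1} (Scott and Styer, \emph{J.\ Number Theory} 105 (2004)), so there is no in-paper argument to compare against; your proposal has to stand on its own, and as written it does not.

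The central gap is that the decisive step --- ``plugging into the size inequalities $p^{m_1}<q^{s}$, $q^{n_1}<p^{t}$ should produce incompatible lower and upper bounds on $(s,t)$'' --- is asserted rather than derived, and in fact no contradiction can follow from the ingredients you list. Everything in your framework (the subtraction identity $q^{n_1}(q^{s}-1)=p^{m_1}(p^{t}-1)$, the divisibilities $p^{m_1}\mid q^{s}-1$ and $q^{n_1}\mid p^{t}-1$, the common quotient $C$, the matching $v_2(q^{s}-1)=v_2(p^{t}-1)$, and the LTE bookkeeping) uses only that the right-hand side is a \emph{fixed} integer, not that it is exactly a power of $2$. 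But for a fixed right-hand side the conclusion is false: $13^{1}-3^{1}=10$ and $13^{3}-3^{7}=10$ give two solutions, and one checks that this pair satisfies every constraint you write down ($13\cdot 168=3\cdot 728=2184$, $C=56$, $v_2(168)=v_2(728)=3$, and both size inequalities). So your machinery is consistent with two solutions and cannot close the argument; the entire content of the lemma lives in the fact that $q^{n_1}-p^{m_1}=2^{k}$ has cofactor exactly $1$, and your sketch never exploits that beyond recording $k=v_2(q^{n_1}-p^{m_1})$. The $k\in\{1,2\}$ branch is also not viable as described: $q^{n}-p^{m}$ is not a norm from $\mathbb{Z}[i]$ in general, so writing $2^{k}=(-i)^{k}(1+i)^{2k}$ gives you no factorization of the left-hand side to compare across the two solutions. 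The actual proof of this result in \cite{ScSt1} is substantially more delicate, and if you want to use the lemma you should cite it as the present paper does rather than attempt to rederive it by elementary subtraction and lifting-the-exponent.
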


\begin{Lemma}[Theorem 1.1 of \cite{Be2}]  
Let $c$ and $b$ be positive integers.  Then there exists at most one pair of positive integers $(z,y)$ for which 
$$ 0 < |c^{z} - b^y| < \frac{1}{4} \max\{ c^{z/2}, b^{y/2} \}.$$
\end{Lemma}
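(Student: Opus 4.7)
The plan is to argue by contradiction: suppose $(z_1, y_1)$ and $(z_2, y_2)$ are two distinct pairs of positive integers that both satisfy the stated inequality. The first observation is that the hypothesis forces the ratio $c^{z_i}/b^{y_i}$ to lie within $O(\max\{c^{z_i}, b^{y_i}\}^{-1/2})$ of $1$. Taking logarithms and using $|\log(1+t)| \le 2|t|$ for $|t| \le 1/2$, each solution produces the extremely sharp bound
$$|z_i \log c - y_i \log b| \;<\; \frac{C}{\max\{c^{z_i/2},\, b^{y_i/2}\}}$$
for an effectively computable absolute constant $C$. Each pair therefore yields an unusually good rational approximation to the number $\log c / \log b$ (which we may assume to be irrational, since otherwise $c$ and $b$ are powers of a common base and $|c^z-b^y|=0$ is the only alternative).

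The next step is to apply a gap principle drawn from the theory of continued fractions. Two distinct rational approximations of the quality produced above must be consecutive (or nearly consecutive) convergents in the continued fraction expansion of $\log c / \log b$, which forces the exponents $z_1$ and $z_2$ (and correspondingly $y_1, y_2$) to grow geometrically apart. This structural consequence is not by itself strong enough to eliminate both solutions, but it reduces the problem to controlling the second, larger solution in terms of the first.

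To close the remaining gap, I would invoke Bennett's hypergeometric method, which is the technical core of the argument. Using Pad\'e approximants to $(1-x)^{1/n}$ built from shifted Jacobi (hypergeometric) polynomials, one produces effective Thue-type lower bounds of the form $|c^z - b^y| \gg \max\{c^z, b^y\}^{1-\kappa}$ with an explicit exponent $\kappa < 1/2$ valid outside a computable exceptional set. Combining this lower bound with the hypothesized upper bound $\tfrac{1}{4}\max\{c^{z/2}, b^{y/2}\}$ yields an absolute upper bound on the larger solution once the smaller is fixed, and the remaining bounded range is disposed of by direct inspection.

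The main obstacle is sharpening the hypergeometric bound enough to match the explicit constant $\tfrac{1}{4}$ in the hypothesis. This requires carefully bounding the archimedean sizes of the Pad\'e numerators and remainders and simultaneously extracting the correct $p$-adic valuations of their leading coefficients at the primes dividing $b$ and $c$; only with both estimates tight does the collision between the hypergeometric Thue bound and the continued-fraction gap principle leave no room for a second pair $(z,y)$. Arranging this delicate numerical balance, uniformly in $b$ and $c$, is the hard part of Bennett's argument.
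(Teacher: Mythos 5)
This lemma is not proved in the paper at all: it is quoted verbatim as Theorem 1.1 of Bennett's \emph{Differences between perfect powers} (reference \cite{Be2}), so the only ``proof'' the paper offers is the citation. Your proposal is therefore an attempt to reprove Bennett's theorem, and as it stands it is a research plan rather than a proof. The opening reduction is fine: each solution does give $|z\log c - y\log b| \ll \max\{c^{z/2},b^{y/2}\}^{-1}$, hence a very good rational approximation to $\log c/\log b$, and a determinant/gap principle then separates two solutions. But the step you lean on to finish --- an effective Thue-type lower bound $|c^z-b^y| \gg \max\{c^z,b^y\}^{1-\kappa}$ with $\kappa < 1/2$, ``valid outside a computable exceptional set,'' uniformly in the positive integers $b$ and $c$ --- is not an available tool. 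The hypergeometric method yields effective irrationality measures only in the presence of an unusually good initial approximation (e.g.\ to $(b/c)^{1/n}$ for specific pairs), and no version of it gives exponent better than $1/2$ uniformly over all bases; if it did, it would by itself supersede much of what is known about Pillai-type equations. You acknowledge this yourself when you write that arranging the numerical balance ``is the hard part of Bennett's argument'' --- that hard part is precisely what is missing, and the final clause ``the remaining bounded range is disposed of by direct inspection'' cannot work either, since a bounded range of exponents still ranges over infinitely many pairs $(b,c)$.

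It is also worth noting that Bennett's actual proof of this statement is essentially elementary: it combines the linear-form-in-logarithms reformulation with a careful gap principle and case analysis on the two hypothetical solutions, and does not route through Pad\'e approximants at all. So even apart from the gap, your sketch proposes far heavier machinery than the theorem requires. If you want to supply a proof rather than a citation, the productive direction is to push the determinant identity $|x_1y_2-x_2y_1|\log b \le x_2|\Lambda_1|+x_1|\Lambda_2|$ against the strength of the hypothesis $|\Lambda_i| < \tfrac{1}{2}\max\{c^{z_i/2},b^{y_i/2}\}^{-1}$, which is where the constant $\tfrac14$ actually earns its keep.
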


\begin{Lemma}  
If a prime $p$ is of the form $a^2 + 64 b^2$ for some integers $a$ and $b$, then 2 is a quartic residue modulo $p$.  
\end{Lemma}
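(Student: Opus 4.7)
The plan is to deduce the lemma from the classical result of Dirichlet (equivalent to a determination of Gauss) on the biquadratic character of $2$: if $p$ is a prime with $p \equiv 1 \bmod 8$ and $p = A^{2} + B^{2}$ with $A$ odd and $B$ even, then
$$ 2^{(p-1)/4} \equiv (-1)^{B/4} \pmod{p}, $$
so that $2$ is a quartic residue modulo $p$ if and only if $8 \mid B$.

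First I would verify that the hypothesis places $p$ in the setting of Dirichlet's theorem. Since $p$ is an odd prime and $64b^{2}$ is even, $a$ must be odd, whence $a^{2} \equiv 1 \bmod 8$ and therefore $p \equiv 1 \bmod 8$. In particular, $\Legendre{2}{p} = 1$, so it is meaningful to ask whether $2$ is a fourth power modulo $p$.

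Next I would identify the pair $(A,B)$ required by the criterion. Rewriting the hypothesis as $p = a^{2} + (8b)^{2}$ exhibits $p$ as a sum of an odd square and an even square. By the essentially unique representation of a prime $p \equiv 1 \bmod 4$ as a sum of two squares, the pair appearing in Dirichlet's criterion must satisfy $\{A^{2}, B^{2}\} = \{a^{2}, (8b)^{2}\}$, and parity forces $A = \pm a$ and $B = \pm 8b$. Since $8 \mid B$, the criterion yields $2^{(p-1)/4} \equiv 1 \bmod p$, which is exactly the assertion that $2$ is a biquadratic residue modulo $p$.

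The sole (and minor) obstacle is invoking the correct classical statement; once it is on the table the deduction is immediate. A standard reference for Dirichlet's formula is Ireland and Rosen, \emph{A Classical Introduction to Modern Number Theory}, Proposition~9.9.1. Alternatively one may give a short Gaussian-integer proof by factoring $p = \pi \bar{\pi}$ in $\intZ[i]$ with $\pi$ primary, and evaluating the biquadratic residue symbol $\Legendre{2}{\pi}_{4}$ via its supplementary law, which depends only on the residue class of $\pi$ modulo $(1+i)^{3}$; in our situation the relevant residue is determined by $a \bmod 8$ and $8b \bmod 8$, and the latter vanishes, giving $\Legendre{2}{\pi}_{4} = 1$.
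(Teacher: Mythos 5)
Your proof is correct and takes essentially the same approach as the paper, which simply cites Dirichlet's classical determination of the biquadratic character of $2$ without further argument. Your deduction from the formula $2^{(p-1)/4} \equiv (-1)^{B/4} \pmod{p}$ (with $p = A^2 + B^2$, $A$ odd, $B$ even, so that here $B = \pm 8b$) is the standard one and fills in the routine details the paper omits.
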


\begin{proof}
A proof is found in \cite{Dir} which is simpler than Gauss's earlier proof of a conjecture of Euler.  
\end{proof}

\begin{Lemma}  
For any prime $p \equiv 1 \bmod 16$, 2 is an octic residue modulo $p$ if and only if $p = a^2 + 256 b^2$ for some integers $a$ and $b$.  
\end{Lemma}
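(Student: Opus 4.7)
The plan is to reduce the lemma to a classical statement about the octic residue symbol in $\mathbb{Z}[i]$. Since $p \equiv 1 \pmod{16}$ forces $p \equiv 1 \pmod 8$, the prime $p$ splits in $\mathbb{Z}[i]$ and admits a representation $p = A^2 + B^2$, unique up to sign and order of the summands, with $A$ odd and $B$ even. The existence of integers $(a,b)$ with $p = a^2 + 256 b^2$ is clearly equivalent to the single condition $16 \mid B$, so the lemma reduces to showing: $2$ is an octic residue modulo $p$ if and only if $16 \mid B$.

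I would deduce the forward direction by combining Lemma 2.3 with a further octic computation. If $2$ is an eighth-power residue modulo $p$, then in particular it is a fourth-power residue, so Lemma 2.3 already forces $8 \mid B$. To upgrade $8 \mid B$ to $16 \mid B$, one computes the octic residue character $\left(\frac{2}{\pi}\right)_8$ for a suitably normalized (primary) prime $\pi = A + Bi$ above $p$. Using the factorization $2 = -i(1+i)^2$ in $\mathbb{Z}[i]$ one obtains
$$\left(\frac{2}{\pi}\right)_8 = \left(\frac{-i}{\pi}\right)_8 \left(\frac{1+i}{\pi}\right)_8^2,$$
and each factor on the right can be written as an explicit eighth root of unity depending on $A$ and $B$ modulo small powers of $2$. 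Tracking these values shows that the character equals $1$ precisely when $16 \mid B$, yielding both directions of the stated equivalence.

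The main obstacle will be the explicit evaluation of the octic symbol $\left(\frac{1+i}{\pi}\right)_8$, which is a supplementary law for octic reciprocity and lies genuinely beyond the biquadratic theory used in the proof of Lemma 2.3. Rather than rederive this law, the most efficient route is to cite the classical result directly --- from, e.g., Lemmermeyer's \emph{Reciprocity Laws} or the original work of Western on octic residues --- where precisely this criterion ($2$ is an eighth-power modulo $p \equiv 1 \pmod{16}$ iff $p = a^2 + 256 b^2$) is stated in the form needed here.
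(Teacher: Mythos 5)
Your proposal is correct and ends up in essentially the same place as the paper: the paper's proof of this lemma is simply a citation to Whiteman, who attributes the statement to Reuschle and the first proof to Western --- the very source you propose to cite. Your preliminary sketch via the octic character of $2 = -i(1+i)^2$ in $\mathbb{Z}[i]$ is a reasonable outline of how the classical proof goes, but since you ultimately defer to the literature, the two proofs coincide in substance.
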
  

\begin{proof}   
This lemma is found in Whiteman \cite{Wh}, who cites Reuschle \cite{Re} for the original statement of the lemma and Western \cite{We} for the first proof.  
\end{proof}

\begin{Lemma}[Theorem 1.8 of \cite{LeSt}]   
If (1.1) has more than one solution and is not one of the six exceptional cases given in Theorem 1.1, then $a=2$, $b>10^9$, and $c > 10^{18}$.
\end{Lemma}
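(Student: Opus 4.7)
The plan is to combine structural restrictions on $(a,b,c)$ coming from $p$-adic analysis of (1.1) with explicit linear-forms-in-logarithms bounds, completed by a finite computational search.

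First I would reduce to $a=2$. If $a$ is an odd prime, then $a$, $b$, $c$ are all odd, so any solution $(x,y,z)$ of (1.1) satisfies $b^y \congruent -a^x \bmod 8$ and $c^z \congruent a^x \bmod 8$, pinning down the parities of $x$, $y$, $z$ modulo small powers of $2$. The key engine is Lemma 2.1: for any fixed $k$, the equation $c^z - b^y = 2^k$ has at most one solution, which severely constrains how two solutions of (1.1) can coexist once parity cases are enumerated. Pushing this parity/valuation analysis on $(x_i,y_i,z_i)$, along the lines of Lemma 1.2 of \cite{LeSt}, forces $(a,b,c)$ into the six exceptional triples listed in Theorem 1.1.

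Second, with $a=2$ fixed, I would bound $b$ and $c$ from below. Lemma 2.2 forces any two solutions $(x_1,y_1,z_1)$ and $(x_2,y_2,z_2)$ to be widely separated in size; comparing $c^{z_1}$ and $c^{z_2}$ shows that the smaller solution has $z_1 = 1$, so $c = 2^{x_1} + b^{y_1}$. For the second solution $2^{x_2} + b^{y_2} = c^{z_2}$ with $z_2 > 1$, one expands $(2^{x_1}+b^{y_1})^{z_2}$ and reads off $2$-adic and $b$-adic valuations to extract strong divisibility conditions on $x_2$, $y_2$, $z_2$. One then applies an explicit lower bound on the linear form $|x_2 \log 2 + y_2 \log b - z_2 \log c|$ (Laurent--Mignotte, or Matveev for the three-logarithm case) to obtain an effective upper bound on $\max(x_2,y_2,z_2)$ in terms of $\log b$ and $\log c$; feeding this back into $c = 2^{x_1}+b^{y_1}$ produces the absolute upper bounds that one hopes to push down to the explicit thresholds $b \le 10^9$ and $c \le 10^{18}$.

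Finally, one performs a computer search over the remaining $(b,c)$ below those thresholds to verify that no non-exceptional triple admits two solutions. The main obstacle is the scale of this last step: even after Baker--Davenport reduction, the raw enumeration over $b \le 10^9$ and $c \le 10^{18}$ is infeasible, so one must exploit every congruential obstruction available — for instance, the residue-class restrictions modulo $24$ in Theorem 1.2, small-prime sieves applied to $c - b^{y_1} = 2^{x_1}$, and the requirement that $c$ be prime — to prune the search to manageable size. Balancing the analytic bound against the computational cost is what ultimately fixes the numerical thresholds $10^9$ and $10^{18}$.
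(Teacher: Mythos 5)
This lemma is not proved in the present paper at all: it is imported verbatim as Theorem 1.8 of \cite{LeSt}, so the only honest ``proof'' here is a citation. Your proposal is therefore an attempt to reconstruct the argument of the earlier paper, and as such it has a genuine gap: it never actually establishes the explicit thresholds $b>10^9$ and $c>10^{18}$. The two load-bearing steps are left as aspirations --- you say the linear-forms bound ``produces the absolute upper bounds that one hopes to push down to the explicit thresholds,'' and you concede that the concluding search over $b\le 10^9$, $c\le 10^{18}$ is ``infeasible'' without pruning whose sufficiency you do not demonstrate. A proof that ends by ``balancing the analytic bound against the computational cost'' has not fixed the constants; it has only explained why one would like them to be fixable. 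There is also a smaller overreach: the claim that Lemma 2.2 alone forces $z_1=1$ is not immediate (in \cite{LeSt} this is a separate result, Corollary 1.6, and it needs more than the gap principle).

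The actual route in \cite{LeSt} is much more direct and avoids any large computation. Once the two solutions are put in the forms (1.2) and (1.3) --- in particular $z_1=1$, $2\mid x_1$, $2\mid y_1$ --- the bound $b=p>10^9$ is read off from Corollary 1.7 of Bauer and Bennett \cite{BB}, a hypergeometric-method result on generalized Ramanujan--Nagell equations in which the constant $10^9$ already appears as an explicit threshold. The bound on $c$ then costs essentially nothing: since $2\mid y_1$ gives $y_1\ge 2$, the first solution yields $c=q=2^{x_1}+p^{y_1}>p^2>10^{18}$. So the numerical constants are inherited from a single cited analytic result plus the parity of $y_1$, not extracted from a Baker-type bound followed by enumeration. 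Your first step (reducing to $a=2$ via parity and Lemma 2.1) is in the right spirit, but the second and third steps substitute an unworkable general-purpose strategy for the specific tool that actually makes the lemma true with these particular constants.
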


\section{$(p,q) \not\equiv (13, 5) \bmod 24$}   

The purpose of this section is to prove the following

\begin{Proposition}  
If $p \equiv q \equiv 5 \bmod 8$, then the equation
$$ 2^x + p^y = q^z, p, q \in \mathbb{P}$$
has at most one solution in positive integers $(x,y,z)$.  
\end{Proposition}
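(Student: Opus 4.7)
The plan is to derive a contradiction from the assumption that both (1.2) and (1.3) hold with $p \equiv q \equiv 5 \bmod 8$. (None of the six exceptional cases in Theorem~1.1 has both prime bases congruent to $5$ modulo $8$, so Theorem~1.1 reduces Proposition~3.1 to ruling out this configuration.)

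The first step is a short mod-$8$ analysis. Since $y_1$ is even and $p \equiv 5 \bmod 8$, $p^{y_1} \equiv 1 \bmod 8$; if $x_1 \ge 3$ this forces $q \equiv 1 \bmod 8$, contradicting $q \equiv 5 \bmod 8$. Hence $x_1 = 2$ and $q = p^{y_1} + 4$. Writing $S = p^{y_1/2}$, an odd positive integer, we obtain the key identity $q = S^2 + 4$. A parallel mod-$8$ argument on (1.3), using $x_2$ even and $y_2, z_2$ odd, gives $x_2 \ge 4$. Lemma~2.2 applied to (1.2), together with $q > 10^{18}$, then shows that $(z,y) = (1, y_1)$ is the unique pair satisfying Bennett's inequality for bases $(q, p)$; since $(z_2, y_2) \neq (1, y_1)$, one obtains the crucial lower bound $2^{x_2} \ge q^{z_2/2}/4$.

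The heart of the argument will exploit $q = S^2 + 4$ with $S$ odd. Classically, $\sqrt q$ then has the periodic continued fraction $[S; \overline{(S-1)/2, 1, 1, (S-1)/2, 2S}]$ of period $5$, and $\eta = (S + \sqrt q)/2$ is the fundamental unit of $\mathcal{O} = \mathbb{Z}[(1+\sqrt q)/2]$, with norm $-1$ and small size $\log \eta = \tfrac{1}{2}\log q + O(q^{-1})$. Because $q \equiv 4 \bmod p$, $p$ splits in $\mathcal{O}$ as $p = \pi \bar\pi$, with $(\sqrt q - 2) = \pi^{y_1}$ as ideals. Setting $Q = q^{(z_2-1)/2}$, $R = p^{(y_2-1)/2}$, and $T = 2^{x_2/2}$, equation (1.3) becomes $qQ^2 - pR^2 = 2^{x_2}$, which factors in $\mathcal{O}$ as $(\sqrt q\,Q - T)(\sqrt q\,Q + T) = p^{y_2}$; the two factors are coprime (their difference $2T$ is a unit at every prime dividing $p$), so $(\sqrt q\,Q - T) = \pi^{y_2}$ as ideals. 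Raising the two ideal identities to the powers $y_2$ and $y_1$ respectively, both sides become $\pi^{y_1 y_2}$, so the corresponding generators differ by a unit, yielding the key relation
$$
(\sqrt q\,Q - T)^{y_1} = \pm\, \eta^{N}(\sqrt q - 2)^{y_2}
$$
in $\mathcal{O}$ for some integer $N$, which a norm comparison shows must be odd.

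Finally, taking real absolute values and logarithms produces the exact relation $N \log \eta = y_2 \log(\sqrt q + 2) - y_1 \log(q^{z_2/2} + 2^{x_2/2})$. Using $\log \eta = \tfrac{1}{2}\log q + O(q^{-1})$ and the Bennett lower bound $2^{x_2} \ge q^{z_2/2}/4$, a careful asymptotic analysis will pin down $N = y_2 - y_1 z_2$; substituting back and expanding $(\sqrt q)^{z_2} = (2\eta - S)^{z_2}$ via the binomial theorem should then force a numerical contradiction by matching the rational and $\sqrt q$ parts on both sides. The hardest step will be this final coefficient matching: the particular period-$5$ CF structure, and specifically the small size of the partial quotients $(S-1)/2, 1, 1, (S-1)/2, 2S$, is what makes the $\eta$-expansion tractable and what ultimately rules out the surviving candidate values of $N$, $y_2$, and $z_2$.
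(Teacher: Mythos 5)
Your setup tracks the paper's: the mod-$8$ step forcing $x_1=2$ and $q=p^{y_1}+4$, the period-$5$ continued fraction of $\sqrt{S^2+4}$, the splitting of $p$ in $\mathbb{Q}(\sqrt q)$, and the deduction that $2^{x_2/2}+q^{(z_2-1)/2}\sqrt q$ equals a unit times an odd power of an element of norm $-p^{y_1/2}$ are all present in the paper's proof (which uses the CF data to show that $y_1/2$ is the \emph{minimal} exponent $Z_1$ with $X^2-Y^2q=\pm p^{Z_1}$ solvable, so that the connecting exponent $t=y_2/Z_1$ is odd). The gap is in your endgame. The relation $(\sqrt q\,Q-T)^{y_1}=\pm\,\eta^{N}(\sqrt q-2)^{y_2}$ is a \emph{valid consequence} of the two assumed solutions; matching its rational and $\sqrt q$ parts merely restates it and produces no contradiction, and an archimedean size analysis pinning down $N$ cannot rule anything out either, since nothing in the sizes is inconsistent. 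Decisively, after the mod-$8$ step you never again use the hypothesis $p\equiv 5\bmod 8$, which is the entire content of the proposition: the case $p\equiv 13\bmod 24$, $q\equiv 5\bmod 24$ must be distinguished from the surviving case $p\equiv 1\bmod 48$ precisely by the fact that $2$ is a quadratic nonresidue modulo $p$, so any correct proof must at some point extract a statement about $\Legendre{2}{p}$.

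The paper closes the argument by reducing the unit relation modulo $p$ rather than over $\mathbb{R}$. Since $p\mid P_4=(p^{y_1}+3)p^{y_1/2}/2$, the rational part of the fundamental solution of $x^2-qy^2=-1$, Lemma 3.2 forces every norm-$(+1)$ unit $U+V\sqrt q$ to satisfy $p\mid V$ and $U\equiv\pm1\bmod p$; combined with $q\equiv 2^{x_1}=4\bmod p$ and $X_1^2\equiv Y_1^2q\equiv 1\bmod p$, comparing $\sqrt q$-coefficients gives $2^{z_2-1}\equiv\pm 2^{t-2}\bmod p$ with $z_2$ and $t$ odd, hence $\Legendre{2}{p}=1$, contradicting $p\equiv 5\bmod 8$. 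To repair your write-up you would need to replace the asymptotic/coefficient-matching finish with such a congruence modulo $p$ (and, as a minor point, keep the unit in $\mathbb{Z}[\sqrt q]$ rather than using the half-integral $\eta=(S+\sqrt q)/2$, since the elements being compared have odd norm and rational integer coefficients).
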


We will use four lemmas.

\begin{Lemma}  
Let $D$ be a natural number which is not a perfect square. Let $h$, $k$, $h_1$, and $k_1$ be integers. Suppose the equation $h^2 - D k^2 = -1$ is solvable, and that $h_1 + k_1 \sqrt{D}$ is its fundamental solution.  
Let $p$ be any prime dividing $h_1$.  Then if $U^2 - V^2 D = 1$, we must have $p \mid V$.    
\end{Lemma}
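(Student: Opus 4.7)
The plan is to exploit the standard structure theorem for solutions of the Pell-like equations $h^2-Dk^2=\pm 1$ when the $-1$ equation is solvable. Recall that in this case the units of norm $\pm 1$ in $\mathbb{Z}[\sqrt{D}]$ form a group generated (up to sign) by the fundamental solution $\eta_1 := h_1+k_1\sqrt{D}$ of norm $-1$, and the subgroup of units of norm $+1$ is generated by $\eta_1^2$.

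Concretely, I would first compute
\[
\eta_1^2 = (h_1^2 + Dk_1^2) + (2h_1 k_1)\sqrt{D} =: U_0 + V_0\sqrt{D},
\]
noting that $U_0^2 - DV_0^2 = 1$ and that $U_0+V_0\sqrt{D}$ is the fundamental solution of $U^2-DV^2=1$. Hence every integer solution $(U,V)$ of $U^2-DV^2=1$ satisfies
\[
U + V\sqrt{D} = \pm(U_0 + V_0\sqrt{D})^n
\]
for some nonnegative integer $n$. Expanding via the binomial theorem and separating rational and irrational parts gives
\[
V = \pm\sum_{\substack{j=1 \\ j \text{ odd}}}^{n} \binom{n}{j} U_0^{\,n-j} V_0^{\,j} D^{(j-1)/2},
\]
so $V_0$ divides $V$.

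The proof then concludes by the divisibility chain $p \mid h_1 \mid 2h_1k_1 = V_0 \mid V$, which uses only that $p$ is a prime dividing $h_1$ (and in particular requires no hypothesis on the parity of $p$, since the factor $2h_1k_1$ absorbs any even contribution automatically). The main thing to be careful about is the initial claim that $\eta_1^2$ is the \emph{fundamental} solution of the $+1$ equation, rather than merely \emph{a} solution; this is standard and follows from the fact that the full unit group of $\mathbb{Z}[\sqrt{D}]$ of norms $\pm 1$ is cyclic modulo $\pm 1$ and generated by $\eta_1$, so the index-$2$ subgroup of norm-$1$ units is generated by $\eta_1^2$. No other step is delicate.
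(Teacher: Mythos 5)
Your proposal is correct and is essentially the paper's argument written out in full: the paper's proof is just a citation of Theorem 106 of Nagell (that $\eta_1^2$ is the fundamental solution of the $+1$ equation) together with Lemma 1 of Scott (the divisibility of $V$ by $V_0$ for all solutions), which are exactly your two steps. The only nitpick is that the general integer solution is $\pm(U_0+V_0\sqrt{D})^n$ with $n\in\mathbb{Z}$ (or reduce to $U,V>0$ first); the divisibility $V_0\mid V$ survives unchanged since the inverse is the conjugate.
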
 

\begin{proof}
The lemma follows from Theorem 106 of \cite{N} and Lemma 1 of \cite{Sc}. 
\end{proof}

For the next two lemmas we establish notation for the continued fraction for $\sqrt{D}$ and its convergents (the basic results on which this notation is based can be found in \cite{Pe}).  For any non-square positive integer $D$ let
$$ \sqrt{D} = [ a_0, \overline{ a_1, \dots, a_s }]$$
represent the continued fraction expansion of $\sqrt{D}$.  
Let $\frac{P_m}{Q_m}$ be the $m$-th convergent of $\sqrt{D}$ and let 
$$ k_m = (-1)^{m+1} ( P_m^2 - Q_m^2 D),  \eqno{(3.1)}$$
noting that (as shown in \cite{Pe}) all the $k_m$ are positive integers with
$$ k_{ns + j} = k_j, j = 0, \dots, s-1, n \in \intZ^+. \eqno{(3.2)}$$
We are now ready to state

\begin{Lemma}[Theorem 10.8.2 of \cite{Pe}] 
Let $k$ be an integer.  If $|k| < \sqrt{D}$ and $(x,y)$ is a solution of $x^2 - y^2 D = k$ with $\gcd(x, yD)=1$, then $\frac{|x|}{|y|} $ is a convergent of the continued fraction for $\sqrt{D}$. 
\end{Lemma}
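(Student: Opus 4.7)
\medskip

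My plan is to convert the equation $x^2 - Dy^2 = k$ with $|k| < \sqrt{D}$ into a rational approximation of $\sqrt{D}$ sharp enough to invoke Legendre's classical criterion: if $\gcd(p, q) = 1$ and $|\alpha - p/q| < 1/(2q^2)$, then $p/q$ is a convergent of the simple continued fraction expansion of $\alpha$.

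First I would reduce to the case $x, y > 0$ (the conclusion is stated in terms of $|x|/|y|$), note that $y \ne 0$ because $D$ is not a perfect square, and observe that $\gcd(x, yD) = 1$ forces $\gcd(x, y) = 1$. I would then start from the factorization
$$ (x - y\sqrt{D})(x + y\sqrt{D}) = k. $$

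When $k > 0$ the analysis is direct: one has $x > y\sqrt{D}$, hence $x + y\sqrt{D} > 2y\sqrt{D}$, and the factorization together with $|k| < \sqrt{D}$ yields $|x - y\sqrt{D}| < 1/(2y)$, so $|x/y - \sqrt{D}| < 1/(2y^2)$. Legendre's theorem closes this case immediately.

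The case $k < 0$ is the main obstacle, because the naive estimate only gives $|x/y - \sqrt{D}| < 1/y^2$, which sits exactly on the boundary of what Legendre's criterion can handle. The cleanest way around this is to approximate $1/\sqrt{D}$ rather than $\sqrt{D}$: from
$$ y\sqrt{D} - x = \frac{|k|}{y\sqrt{D} + x} < \frac{\sqrt{D}}{2x} $$
(using $y\sqrt{D} > x$) one divides by $x\sqrt{D}$ to obtain $|y/x - 1/\sqrt{D}| < 1/(2x^2)$. Legendre then forces $y/x$ to be a convergent of $1/\sqrt{D}$. Since $\sqrt{D} = [a_0; \overline{a_1, \dots, a_s}]$ with $a_0 = \lfloor \sqrt{D} \rfloor \ge 1$ implies $1/\sqrt{D} = [0; a_0, a_1, a_2, \dots]$, an easy induction shows the convergents of $1/\sqrt{D}$ are precisely $0/1$ followed by $q_n/p_n$, where $p_n/q_n$ runs over the convergents of $\sqrt{D}$. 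Combined with $\gcd(x,y) = 1$ this forces $x/y$ itself to be a convergent of $\sqrt{D}$, which is the desired conclusion. As a fallback, if the reciprocal trick encountered a technical snag one could instead invoke Fatou's refinement of Legendre's theorem (every rational $p/q$ with $|\alpha - p/q| < 1/q^2$ is either a convergent or a mediant of two consecutive convergents) and rule out the mediant possibility using the explicit form of $k$ supplied by the factorization.
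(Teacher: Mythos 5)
Your argument is correct. Note that the paper itself gives no proof of this lemma at all --- it is quoted verbatim as Theorem 10.8.2 of Perron's book --- so there is no internal argument to compare against; what you have supplied is the standard textbook proof that Perron's citation stands in for. Both halves of your case analysis are sound: for $k>0$ the chain $x>y\sqrt{D}$, $x-y\sqrt{D}=k/(x+y\sqrt{D})<\sqrt{D}/(2y\sqrt{D})=1/(2y)$ puts you strictly inside Legendre's $1/(2q^2)$ threshold, and for $k<0$ the reciprocal trick ($|y/x-1/\sqrt{D}|<1/(2x^2)$, then the observation that the convergents of $1/\sqrt{D}=[0;a_0,a_1,\dots]$ are $0/1$ together with the reciprocals $q_n/p_n$ of the convergents of $\sqrt{D}$) is exactly the right way to dodge the boundary case $1/y^2$ where Legendre's criterion fails; the coprimality hypothesis $\gcd(x,yD)=1$ gives the $\gcd(x,y)=1$ needed to identify numerator and denominator exactly. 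Two small housekeeping points: the reason $y\neq 0$ is not that $D$ is non-square (with $y=0$ the equation reads $x^2=k$ and never mentions $D$) but that $\gcd(x,0)=|x|=1$ would force the degenerate case $k=1$ in which the conclusion $|x|/|y|$ is meaningless, so the lemma tacitly assumes $y\neq 0$; and you should also record that $x\neq 0$ (immediate from $\gcd(x,yD)=1$ and $D\geq 2$) before dividing by $x\sqrt{D}$ in the second case. Neither affects the substance, and your fallback via Fatou's refinement is unnecessary.
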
 

\begin{Lemma}    
If $| x^2 - y^2 D| < \sqrt{D}$ and $\gcd(x, yD)=1$, then $| x^2 - y^2 D| = k_m $ for some $m \le s-1$.
\end{Lemma}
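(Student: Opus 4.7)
The plan is to chain Lemma 3.3 with the periodicity relation (3.2). Setting $k = x^2 - y^2 D$, the hypotheses $|k| < \sqrt{D}$ and $\gcd(x, yD) = 1$ are exactly what Lemma 3.3 requires, so $|x|/|y| = P_r/Q_r$ for some $r \ge 0$.

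Since $\gcd(x, yD) = 1$ forces $\gcd(x, y) = 1$, and since each convergent $P_r/Q_r$ is already in lowest terms (by the standard identity $P_r Q_{r-1} - P_{r-1} Q_r = \pm 1$), we can lift the equality of rationals $|x|/|y| = P_r/Q_r$ to the integer equalities $|x| = P_r$ and $|y| = Q_r$. Definition (3.1) then gives
$$|x^2 - y^2 D| = |P_r^2 - Q_r^2 D| = k_r.$$

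Finally, writing $r = ns + m$ with $0 \le m \le s - 1$, the periodicity relation (3.2) yields $k_r = k_m$. Hence $|x^2 - y^2 D| = k_m$ with $m \le s - 1$, as required.

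The main obstacle: essentially none — the statement is an immediate corollary of Lemma 3.3 combined with (3.2). The only delicate point is that one must use $\gcd(x, yD) = 1$ (and not merely the weaker $\gcd(x, y) = 1$) to apply Lemma 3.3, even though only $\gcd(x, y) = 1$ is needed afterward to pass from the rational equality of convergents to the integer equalities $|x| = P_r$, $|y| = Q_r$.
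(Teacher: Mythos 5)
Your proposal is correct and follows the paper's own (much terser) argument exactly: apply Lemma 3.3 to identify $|x|/|y|$ with a convergent, then invoke the periodicity (3.2) to reduce the index below $s$. The intermediate step you spell out — using $\gcd(x,y)=1$ and the lowest-terms property of convergents to pass from $|x|/|y| = P_r/Q_r$ to $|x| = P_r$, $|y| = Q_r$ — is left implicit in the paper but is exactly the right justification.
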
 

\begin{proof} 
By Lemma 3.3, $x/y$ is a convergent of the continued fraction for $[a_0, \allowbreak  \overline{  a_1,  \allowbreak   \dots, a_s }]$, so that (3.2) gives the lemma.  
\end{proof}

The following lemma is obtained by direct calculation.  

\begin{Lemma}  
If $D = p^{2n} +4$ where $p, n \in \intZ^+$ with $2 \nmid p$, then we have 
$$ \sqrt{D} = [ p^n, \overline{  (p^{n} -1)/2, 1, 1 , (p^{n} -1)/2, 2 p^n }]. \eqno{(3.3)}$$
$$
{
\begin{split} 
\frac{P_0}{Q_0} = \frac{p^n}{1},  \frac{P_1}{Q_1}  =& \frac{(p^{2n} - p^n + 2)/2 }{(p^n-1)/2},  \frac{P_2}{Q_2} = \frac{(p^{2n} + p^n + 2)/2 }{(p^n+1)/2} ,  \\ 
 &\frac{P_3}{Q_3} = \frac{p^{2n}+2 }{p^n}, \frac{P_4}{Q_4} = \frac{(p^{2n} +3)p^n/2 }{(p^{2n}+1)/2}. 
\end{split}
}
\eqno{(3.4)}
$$
$$ k_0 = 4, k_1 = p^n, k_2 = p^n, k_3 = 4, k_4 = 1.  \eqno{(3.5)}$$

\end{Lemma}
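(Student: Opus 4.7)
The proof is essentially a routine verification. I plan to proceed in three stages: establish the continued fraction expansion (3.3), read off the convergents (3.4) from the standard recurrence, and then compute the $k_m$ in (3.5) directly from (3.1). I shall assume $p^n \ge 3$ (which is automatic in the application, since $p$ is an odd prime $> 10^9$); the degenerate case $p^n = 1$ corresponds to $\sqrt{5}$ and does not arise.

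To obtain (3.3), I shall run the usual algorithm $\alpha_i = (m_i + \sqrt{D})/d_i$, $a_i = \lfloor \alpha_i \rfloor$, $m_{i+1} = a_i d_i - m_i$, $d_{i+1} = (D - m_{i+1}^2)/d_i$, starting from $(m_0, d_0, a_0) = (0, 1, p^n)$; the bound $a_0 = p^n$ follows from $p^{2n} < D < (p^n+1)^2$. Direct substitution gives the sequence
\[
(m_1, d_1) = (p^n, 4),\ (m_2, d_2) = (p^n-2, p^n),\ (m_3, d_3) = (2, p^n),\ (m_4, d_4) = (p^n-2, 4),\ (m_5, d_5) = (p^n, 1),
\]
with partial quotients $a_1 = (p^n-1)/2$, $a_2 = a_3 = 1$, $a_4 = (p^n-1)/2$, $a_5 = 2p^n$. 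The floor computations are elementary: for instance, $a_3 = \lfloor (2 + \sqrt{D})/p^n \rfloor = 1$ follows from $(2+\sqrt{D})/p^n < 1 + 3/p^n \le 2$, using $\sqrt{D} < p^n + 2/p^n$. Finally, $(m_6, d_6) = (p^n, 4) = (m_1, d_1)$, so the period closes with length $s = 5$, giving (3.3).

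With the partial quotients in hand, (3.4) is produced by the standard recurrence $P_m = a_m P_{m-1} + P_{m-2}$, $Q_m = a_m Q_{m-1} + Q_{m-2}$ (with $P_{-1} = 1$, $Q_{-1} = 0$). Substituting the $a_i$ above and simplifying yields each of the five convergents claimed in (3.4). Then (3.5) is obtained by plugging the convergents into (3.1); for example,
\[
4 k_1 = (p^{2n} - p^n + 2)^2 - (p^n - 1)^2(p^{2n} + 4) = 4 p^n,
\]
and analogous algebra handles $k_2, k_3, k_4$; the value $k_0 = 4$ is immediate from $D - p^{2n} = 4$.

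There is no conceptual obstacle; the only genuine risk is a slip in the algebra, particularly in verifying the floor values $a_i$ and in the $k_m$ computations. I would therefore write out the five steps of the algorithm and the five convergent expansions in full, but omit none of the routine identities, since Lemma 3.5 will be invoked in conjunction with Lemmas 3.1, 3.3, and 3.4 where the precise small values $k_0, \dots, k_4$ are what matter.
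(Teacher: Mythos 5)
Your proposal is correct and is precisely the ``direct calculation'' that the paper invokes without writing out: the paper gives no proof of Lemma 3.5 beyond that phrase, and your execution of the continued fraction algorithm, the convergent recurrence, and the $k_m$ identities all check out (including the caveat $p^n\ge 3$, which holds since $p$ is an odd prime in the application). No issues.
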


We are now ready to give 

\begin{proof}[Proof of Proposition 3.1]
Let $p, q \in \mathbb{P}$, $ 3 \nmid pq$. Assume $2^x + p^y = q^z$ has two solutions $(x, y, z)$.  
Let $Z_1$ be the least positive integer such that there exist rational integers $X$ and $Y$ with $\gcd(X,Yq)=1$ satisfying $X^2 - Y^2 q = \pm p^{Z_1}$ (such $Z_1$ exists by Theorem 1.1).  
Let $\theta$ be any integer of the field $\ratQ(\sqrt{q})$ with norm $-p^{Z_1}$ such that $p \nmid \theta$ and $\theta$ has rational integer coefficients (such $\theta$ exist by Theorem 107 of \cite{N} and Theorem 1.2). 
By Lemma 3.1 of \cite{Sc2} $Z_1 \mid Z$ in any solution of $X^2 - Y^2 q = \pm p^Z$ with $X$ and $Y$ rational integers, $\gcd(X,Yq)=1$.  Applying Theorem 1.1 and using (1.3), let $\beta = 2^{x_2/2}+q^{(z_2-1)/2} \sqrt{q}$.  By (1.3), $\beta$ has norm $-p^{y_2}$, so $Z_1 \mid y_2$. Let $\alpha = \theta^t$ where $t = \frac{y_2}{Z_1}$, where $y_2$ is as in (1.3).  Since $y_2$ is odd, $t$ is odd, 
so that $\alpha$ has norm $-p^{y_2}$.  Now we have $\beta \conj{\beta} = \alpha \conj{\alpha}$.  Noting $\Legendre{q}{p} = 1$ where $\Legendre{q}{p}$ is the Legendre symbol, let $\mathfrak{p} \conj{\mathfrak{p}}$ be the unique ideal factorization of $p$ in $\ratQ(\sqrt{q})$.  Now we have the equation in ideals 
$$[\beta] [\conj{\beta}] = [\alpha] [\conj{\alpha}] = \mathfrak{p}^{y_2} \conj{\mathfrak{p}}^{y_2}$$ 
where $p \nmid \alpha$ and $p \nmid \beta$, so that $[\beta] = \mathfrak{p}^{y_2}$ or $[\beta] = \conj{\mathfrak{p}}^{y_2}$, and $[\alpha] = {\mathfrak{p}}^{y_2}$ 
or $[\alpha] = \conj{\mathfrak{p}}^{y_2}$.  Thus, $[\beta]= [\alpha]$ or $[\beta] = [\conj{\alpha}]$  so there exists a unit $\delta$ such that 
$$\beta = \delta \alpha, {\rm \ or  \ }  \beta = \delta \conj{\alpha}.$$
Since the norms of $\alpha$ and $\beta$ are odd, $\delta$ must have rational integer coefficients.  Let $\xi = \theta$ or $\conj{\theta}$ according as $\beta = \delta \alpha$ or $\delta \conj{\alpha}$.  
Thus we have 
$$ 2^{x_2/2}+q^{(z_2-1)/2} \sqrt{q} = \xi^t \delta,  \eqno{(3.6)}$$
where $\delta$ has norm 1 since $\xi^t$ has norm $-p^{y_2}$.  
Let
$$ \theta = X_1 + Y_1 \sqrt{q}, \delta = U+V \sqrt{q},  \eqno{(3.7)}$$
where $X_1$, $Y_1$, $U$, and $V$ are nonzero rational integers.  

Now we assume $q \equiv 5 \bmod 8$ and apply Lemma 3.5 with $D=q$, $n = y_1/2$, and $p$ as in (1.2), noting that, since $q \equiv 5 \bmod 8$ and $2 \mid y_1$, we must have $x_1=2$ in (1.2).  By (3.1), (3.4), and (3.5) we see that 
$$ P_2^2 - q  Q_2^2  = - p^{y_1/2}.  \eqno{(3.8)} $$
Suppose $X^2 - Y^2 q = \pm p^{n_0} $ where $\gcd(X, Y) =1$ and $n_0 < y_1/2$.  Then by Lemma 3.4 we must have $p^{n_0} = k_m$ for some $m \le 4$, contradicting (3.5).  So we have 
$$ Z_1 = y_1/2.  \eqno{(3.9)}$$
Since $\theta$ is any integer of the field with rational integer coefficients having norm $-p^{Z_1}$ and satisfying $p \nmid \theta$, by (3.8) and (3.4) we can take $\theta = X_1 + Y_1 \sqrt{q}$ where 
$$ (X_1, Y_1, Z_1) = ( \frac{1}{2} (p^{y_1} + p^{y_1/2} + 2), \frac{1}{2} (p^{y_1/2} + 1), y_1/2)  \eqno{(3.10)}$$
From (3.10) we see that 
$$ X_1^2 \equiv Y_1^2 q \equiv 1 \bmod p,  \eqno{(3.11)}$$
where the last congruence holds since $q \equiv 2^{x_1} = 4 \bmod p$, noting $x_1=2$ in (1.2). 

By (3.4) and (3.5), we see that $P_4 + Q_4 \sqrt{q} $ is the fundamental solution of $x^2 - y^2 q = -1$ (since $P_m$ and $Q_m$ increase with $m$).  
Since $p \mid P_4$, by Lemma 3.2 we must have $p \mid V$ in (3.7).  Also $U^2 = V^2 q + 1 \equiv 1 \bmod p$.  So, in (3.7), we have
$$ V \equiv 0 \bmod p, U \equiv \pm 1 \bmod p.  \eqno{(3.12)}$$
By (3.6) we have 
$$ 2^{x_2/2}+q^{(z_2-1)/2} \sqrt{q} = (X_1 \pm Y_1 \sqrt{q})^t (U+V \sqrt{q}). \eqno{(3.13)}$$
Using (3.11) and considering the binomial expansion of $(X_1 \pm Y_1 \sqrt{q})^t = X_t + Y_t \sqrt{q} $ with $t$ odd we find
$$ \pm Y_t \equiv  2^{t-1} Y_1 = 2^{t-2} (2 Y_1) \equiv 2^{t-2} \bmod p,  \eqno{(3.14)}$$
where the last congruence follows from (3.10). 
Since $q \equiv 4 \bmod p$, (3.13) and (3.12) give 
$$ 2^{z_2 -1} \equiv q^{(z_2-1)/2 } \equiv X_t V + Y_t U \equiv \pm Y_t \bmod p.  \eqno{(3.15)}$$
(3.14) and (3.15) give
$$ 2^{z_2 - 1} \equiv \pm 2^{t-2} \bmod p. \eqno{(3.16)}$$
Since $z_2$ and $t$ are both odd, and $p \equiv 1 \bmod 4$ by Theorem 1.2, (3.16) requires $\Legendre{2}{p} = 1$,  $p \not\equiv 5 \bmod 8$, completing the proof of Proposition 3.1.
\end{proof}

\section{$(p,q) \not\equiv (13, 17) \bmod 24$} 

The purpose of this section is to prove the following: 

\begin{Proposition} 
If $p \equiv 5 \bmod 8$ and $q \equiv 1 \bmod 8$, 
then the equation
$$ 2^x + p^y = q^z, p, q \in \mathbb{P}$$
has at most one solution in positive integers $(x,y,z)$. 
\end{Proposition}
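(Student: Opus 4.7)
The plan is to combine Theorem 1.1, Dirichlet's biquadratic-residue criterion (Lemma 2.3), and a 2-adic valuation argument derived by eliminating $p$ between the two solutions. By Theorem 1.1, any two solutions must take the shapes (1.2) and (1.3). From (1.2) I rewrite $q = (p^{y_1/2})^2 + (2^{x_1/2})^2$; and since $p \equiv 5 \bmod 8$, $y_1$ is even, and $q \equiv 1 \bmod 8$, I get $p^{y_1} \equiv 1 \bmod 8$, forcing $x_1 \geq 4$. I then split into two cases. In \textbf{Case A} ($x_1 \geq 6$), $q = A^2 + 64 B^2$ with $A = p^{y_1/2}$, $B = 2^{x_1/2-3}$, so Lemma 2.3 gives that $2$ is a biquadratic residue modulo $q$, whence $v_2(u_q(2)) \leq v_2(q-1) - 2$. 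In \textbf{Case B} ($x_1 = 4$), the sum-of-two-squares representation is $q = (p^{y_1/2})^2 + 4^2$, and by the converse of Lemma 2.3 (part of the same classical Gauss--Dirichlet theorem) $2$ is not a biquadratic residue modulo $q$, so $v_2(u_q(2)) = v_2(q-1) - 1$.

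To eliminate $p$, reduce (1.2) and (1.3) modulo $q$, giving $p^{y_1} \equiv -2^{x_1}$ and $p^{y_2} \equiv -2^{x_2}$. Raising the first to the power $y_2$ (using $y_2$ odd) and the second to $y_1$ (using $y_1$ even) and equating,
$$ 2^{x_1 y_2 - x_2 y_1} \equiv -1 \pmod{q}, $$
which yields $v_2(x_1 y_2 - x_2 y_1) = v_2(u_q(2)) - 1$. Meanwhile I compute $v_2(q-1)$ from $q - 1 = 2^{x_1} + (p^{y_1} - 1)$: lifting-the-exponent gives $v_2(p^{y_1} - 1) = 2 + v_2(y_1)$ (using $v_2(p-1) = 2$ and $v_2(p+1) = 1$ for $p \equiv 5 \bmod 8$), so typically $v_2(q-1) = \min(x_1, 2 + v_2(y_1))$. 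Simultaneously $v_2(x_1 y_2 - x_2 y_1)$ is read off from $v_2(x_1 y_2) = v_2(x_1)$ and $v_2(x_2 y_1) = v_2(x_2) + v_2(y_1)$.

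Combining these should produce the contradiction subcase by subcase, parametrized by $v_2(y_1)$ and $x_1 \bmod 4$. For example, Case B with $v_2(y_1) = 1$ forces $v_2(q-1) = 3$ and hence $v_2(x_1 y_2 - x_2 y_1) = 1$, whereas $v_2(x_1 y_2) = 2$ and $v_2(x_2 y_1) \geq 2$ force $v_2 \geq 2$; Case A with $v_2(y_1) = 1$ closes similarly. The main obstacle will be the delicate subcases (such as $x_1 = 4$ with $y_1 \equiv 0 \bmod 4$, or Case A with larger $v_2(y_1)$) where the $v_2$-inequalities nearly balance. I expect to close these by either invoking Lemma 2.4 (when $q \equiv 1 \bmod 16$ and $x_1 \geq 8$, $2$ is an octic residue modulo $q$, sharpening the bound on $v_2(u_q(2))$) or an auxiliary mod-$p$ congruence: since $v_2(u_p(2)) = 2$ for $p \equiv 5 \bmod 8$ and (1.2) together with (1.3) give $2^{x_1 z_2 - x_2} \equiv 1 \pmod{p}$, this constrains $v_2(x_2)$ and should finish off the remaining subcases.
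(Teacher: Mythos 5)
Your overall strategy is the same as the paper's (reduce to the shapes (1.2)--(1.3), combine the Gauss--Dirichlet quartic/octic residue criteria with $2$-adic bookkeeping of orders modulo $q$, and use an auxiliary congruence modulo $p$), and several of your individual computations are correct: the eliminant $2^{x_1y_2-x_2y_1}\equiv -1 \pmod q$ is a legitimate repackaging of the paper's $w_q$ argument, and your sample subcase (Case B with $v_2(y_1)=1$) does close. But the proposal has a genuine gap that cannot be repaired with the tools you list: you have no mechanism for bounding $v_2(y_1)$, so your casework "parametrized by $v_2(y_1)$" is not finite, and in the subcases with $4\mid y_1$ all of your $2$-adic constraints are simultaneously satisfiable. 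Concretely, take $x_1=6$, $v_2(y_1)=2$, $v_2(x_2)=1$: then $v_2(q-1)=\min(6,\,2+v_2(y_1))=4$, and since $q=a^2+64\cdot 1^2$ is quartic but (by Lemma 2.4 and uniqueness of the two-squares representation) not octic, $v_2(i_q(2))=2$ and $v_2(u_q(2))=2$, so you need $v_2(x_1y_2-x_2y_1)=1$ --- which is exactly what $v_2(x_1y_2)=1$ and $v_2(x_2y_1)=3$ give. Your mod-$p$ relation $4\mid x_1z_2-x_2$ is also satisfied. No contradiction emerges, and similar consistent assignments exist for $x_1\ge 8$ with $v_2(y_1)=3$.

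The missing ingredients are two congruences you never exploit. First, reducing (1.3) modulo $8$ gives $2^{x_2}\equiv q^{z_2}-p^{y_2}\equiv 1-5\equiv 4\pmod 8$, so $x_2=2$ exactly; feeding this into an order argument modulo $p$ (the paper's $w_p$ computation, using that $2$ is a quadratic nonresidue of $p\equiv 5\bmod 8$) pins down $v_2(x_1)=1$, which already kills your Case B. Second --- and this is the step your framework cannot reproduce --- the paper works modulo $5$: Theorem 1.2 allows one to assume $5\nmid pq$, and since $v_2(x_1)=1$ forces $2^{x_1}\equiv 4\pmod 5$, having $4\mid y_1$ would force $p^{y_1}\equiv 1\pmod 5$ and hence $5\mid q$. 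This yields $2\parallel y_1$, hence $p^{y_1}\equiv 9\pmod{16}$ and $v_2(q-1)=3$ exactly, after which the single remaining case contradicts the quartic residuacity of $2$ in one line. Without something playing the role of that mod-$5$ step, your subcase analysis genuinely stalls rather than merely becoming "delicate."
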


We first prove a general lemma.  
We use the following notation: let $d$ be a primitive root of a prime $p$; if an integer $n \equiv d^i \bmod p$ with $0 < i \le p-1$, we call $i$ the index of $n$ for that primitive root $d$ and write 
$$ i = i_p(n). $$
We also use the notation $v_2(n)$ to indicate the greatest integer $t$ such that $2^t \mid n$.  Notice that $v_2( \gcd(i_p(n), p-1))$ is independent of the choice of primitive root $d$.  For brevity, we use the following notation: 
$$ w_p(n) =  \min( v_2(i_p(n)), v_2(p-1) ),$$
so that $0 \le w_p(n) \le v_2(p-1)$.    

We use three simple observations:

\begin{Observation} 
If $a \equiv b \bmod p$, then $w_p(a) = w_p(b)$.  
\end{Observation}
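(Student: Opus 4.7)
The plan is essentially a one-step unwinding of the definition: the value $i_p(n)$, and hence $w_p(n)$, depends only on the residue class of $n$ modulo $p$, so any two representatives of the same class share the same $w_p$.

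Fix once and for all the primitive root $d$ of $p$ used to define $i_p$. Given $a \equiv b \bmod p$ with $p \nmid a$, we have
$$d^{\,i_p(a)} \equiv a \equiv b \equiv d^{\,i_p(b)} \pmod{p},$$
and since $d$ has multiplicative order $p-1$ modulo $p$, this congruence forces $i_p(a) \equiv i_p(b) \pmod{p-1}$. The paper normalizes the index to the range $\{1, 2, \ldots, p-1\}$, which is a complete residue system modulo $p-1$ (with $p-1$ itself playing the role of $0$), so the congruence sharpens to the equality $i_p(a) = i_p(b)$. Taking $v_2$ of both sides and then $\min$ with $v_2(p-1)$ yields $w_p(a) = w_p(b)$.

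The main ``obstacle'' here is simply recognizing that no real obstacle exists: the observation is recorded not because it is difficult but because later steps will treat $w_p$ as a function on residue classes modulo $p$ rather than on integers, so this well-definedness check is a necessary bookkeeping item before it can be freely applied. Even without the specific normalization of $i_p$ to $\{1, \ldots, p-1\}$, one would still obtain the conclusion, because the $\min$ with $v_2(p-1)$ built into the definition of $w_p$ absorbs any ambiguity in $v_2(i_p(n))$ arising from knowing $i_p(n)$ only modulo $p-1$.
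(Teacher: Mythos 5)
Your proof is correct; the paper offers no argument for this Observation at all, treating it as immediate from the definition of $i_p$ and $w_p$, and your unwinding of that definition (including the correct remark that the $\min$ with $v_2(p-1)$ makes $w_p$ well defined even knowing the index only modulo $p-1$) is exactly the intended justification.
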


\begin{Observation} 
We have 

(i)  $w_p(-a) = w_p(a)$ when $w_p(a) < v_2((p-1)/2)$,

(ii)  $w_p(-a) = v_2(p-1)$ when $w_p(a) = v_2((p-1)/2)$, 

(iii) $w_p(-a) = v_2((p-1)/2)$ when $w_p(a) = v_2(p-1)$.

\end{Observation}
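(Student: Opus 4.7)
The plan is to compute $i_p(-a)$ in terms of $i_p(a)$ and then track $2$-adic valuations. Set $m = v_2(p-1)$. The key identity is that $-1 \equiv d^{(p-1)/2} \bmod p$ for every primitive root $d$ of $p$, so
$$ i_p(-a) \equiv i_p(a) + (p-1)/2 \bmod (p-1). $$
Since $w_p(n) = \min(v_2(i_p(n)), m)$ by definition, it suffices to determine the $2$-adic valuation of the right side (after reduction modulo $p-1$) and cap the result at $m$.

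The argument then splits into three cases according to how $v_2(i_p(a))$ compares with $v_2((p-1)/2) = m-1$. When $v_2(i_p(a)) < m-1$, the standard rule $v_2(x+y) = \min(v_2(x), v_2(y))$, valid when the two valuations differ, yields $v_2(i_p(a) + (p-1)/2) = v_2(i_p(a))$, giving $w_p(-a) = v_2(i_p(a)) = w_p(a)$ and proving (i). When $v_2(i_p(a)) = m-1$, writing $i_p(a) = 2^{m-1} u$ and $(p-1)/2 = 2^{m-1} v$ with $u, v$ odd shows $u + v$ is even, so $v_2(i_p(a) + (p-1)/2) \ge m$; capping at $m$ yields $w_p(-a) = m = v_2(p-1)$ and proves (ii). When $w_p(a) = v_2(p-1) = m$, we have $v_2(i_p(a)) \ge m > m-1 = v_2((p-1)/2)$, and the additivity rule again gives $v_2(i_p(a) + (p-1)/2) = m-1$, whence $w_p(-a) = m-1 = v_2((p-1)/2)$, proving (iii).

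The one subtlety to verify is that $i_p(-a)$ may differ from $i_p(a) + (p-1)/2$ by a multiple of $p-1$ arising from the reduction step. But $v_2(p-1) = m$, so any such multiple has $v_2 \ge m$ and therefore cannot alter the capped value $\min(v_2(\cdot), m)$. I do not foresee a genuine obstacle: the entire proof reduces to elementary $2$-adic valuation arithmetic resting on the single fact that $-1$ has index $(p-1)/2$ with respect to any primitive root.
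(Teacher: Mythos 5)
Your proof is correct, and it is the intended argument: the paper states this Observation without proof, and the standard justification is exactly your computation $i_p(-a)\equiv i_p(a)+(p-1)/2 \pmod{p-1}$ followed by the three-way comparison of $v_2(i_p(a))$ with $v_2((p-1)/2)$, including the check that reduction modulo $p-1$ cannot affect the value once it is capped at $v_2(p-1)$.
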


\begin{Observation} 
For a given prime $p$ and a given integer $a$, if $w_p(a^t) < v_2(p-1)$, then $v_2(t)$ is determined by $w_p(a^t)$.
\end{Observation}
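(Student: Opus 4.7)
The plan is to reduce the observation to a simple $2$-adic computation on the exponent $t\,i_p(a)$ modulo $p-1$, using the ultrametric property of $v_2$.

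First I would fix notation: write $p-1 = 2^e m$ with $m$ odd, so $e = v_2(p-1)$, and set $i = i_p(a)$, $f = v_2(i)$. Since $p$ and $a$ are fixed, both $e$ and $f$ are fixed constants for the rest of the argument. Let $d$ be the primitive root used to define the index. From $a^t \equiv d^{t i} \pmod p$ we get $i_p(a^t) \equiv ti \pmod{p-1}$, so
$$ i_p(a^t) = ti - k(p-1) $$
for some non-negative integer $k$, with the left side lying in $\{1,\dots,p-1\}$.

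Next I would compute $v_2$ of both sides. Note that $v_2(ti) = f + v_2(t)$ and $v_2\bigl(k(p-1)\bigr) \ge e$. If $f + v_2(t) < e$, then the two summands on the right have different $2$-adic valuations, so
$$ v_2\bigl(i_p(a^t)\bigr) = v_2(ti) = f + v_2(t) < e. $$
If instead $f + v_2(t) \ge e$, then both summands have $v_2 \ge e$, so $v_2(i_p(a^t)) \ge e$.

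Now I would invoke the hypothesis. Since $w_p(a^t) = \min(v_2(i_p(a^t)), e) < e$, the min must be achieved by the first argument, so $v_2(i_p(a^t)) < e$. This rules out the second case above, leaving
$$ w_p(a^t) = v_2(i_p(a^t)) = f + v_2(t), $$
so $v_2(t) = w_p(a^t) - f = w_p(a^t) - v_2(i_p(a))$. Since $v_2(i_p(a))$ depends only on $p$ and $a$, this determines $v_2(t)$ from $w_p(a^t)$, as required.

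There is essentially no obstacle here: the statement is a direct consequence of the basic ultrametric identity $v_2(x+y)=\min(v_2(x),v_2(y))$ when the valuations differ, combined with the fact that the truncation $\min(\cdot,e)$ in the definition of $w_p$ is only active precisely when $v_2(i_p(a^t)) \ge e$. The only point to watch is the convention fixing $i_p$ in $\{1,\dots,p-1\}$, which ensures the representation $i_p(a^t)=ti-k(p-1)$ with $k\ge 0$ is legitimate and the $2$-adic analysis goes through cleanly.
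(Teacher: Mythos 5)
Your argument is correct: the reduction to $i_p(a^t)\equiv t\,i_p(a) \pmod{p-1}$ followed by the ultrametric computation of $v_2$, with the hypothesis $w_p(a^t)<v_2(p-1)$ forcing the case where the valuations of $t\,i_p(a)$ and $k(p-1)$ differ, yields exactly $v_2(t)=w_p(a^t)-v_2(i_p(a))$. The paper states this Observation without proof as a ``simple observation,'' and your derivation is precisely the intended justification, including the correct handling of the truncation by $\min(\cdot,v_2(p-1))$.
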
 

\begin{Lemma} 
If (1.2) and (1.3) hold with $v_2(x_1) = v_2(x_2)$, then
$$w_q(2^{x_1}) = v_2((q-1)/2). \eqno{(4.1)}$$ 
\end{Lemma}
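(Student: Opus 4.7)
The plan is to work modulo $q$ and exploit Observations 2.1--2.3 together with the parity asymmetry between $y_1$ (even, by (1.2)) and $y_2$ (odd, by (1.3)).

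First, I would reduce (1.2) and (1.3) modulo $q$ to get $2^{x_i} \equiv -p^{y_i} \pmod q$ for $i=1,2$, and apply Observation 2.1 to obtain $w_q(2^{x_i}) = w_q(-p^{y_i})$. Since $i_q(2^x) \equiv x\, i_q(2) \pmod{q-1}$, a direct check gives $w_q(2^x) = \min(v_2(x) + v_2(i_q(2)), v_2(q-1))$, so $w_q(2^x)$ depends on $x$ only through $v_2(x)$. Thus the hypothesis $v_2(x_1)=v_2(x_2)$ forces $w_q(2^{x_1})=w_q(2^{x_2})$, yielding
$$w_q(-p^{y_1}) = w_q(-p^{y_2}). \eqno{(\ast)}$$

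Second, set $\alpha = w_q(p)$ and $\beta = w_q(p^{y_1})$. Since $y_2$ is odd, $w_q(p^{y_2}) = \alpha$; since $y_1$ is even, $v_2(y_1) \ge 1$, and the analogous formula $\beta = \min(v_2(y_1)+v_2(i_q(p)),v_2(q-1))$ gives $\beta > \alpha$ whenever $\alpha < v_2(q-1)$, and in particular $\beta = v_2(q-1)$ whenever $\alpha \ge v_2((q-1)/2)$. I would then split on the three cases for $\alpha$ dictated by Observation 2.2. If $\alpha < v_2((q-1)/2)$, then 2.2(i) gives $w_q(-p^{y_2}) = \alpha$, while $\beta > \alpha$ forces $w_q(-p^{y_1}) > \alpha$, contradicting $(\ast)$. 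If $\alpha = v_2((q-1)/2)$, then 2.2(ii) gives $w_q(-p^{y_2}) = v_2(q-1)$, whereas $\beta = v_2(q-1)$ combined with 2.2(iii) gives $w_q(-p^{y_1}) = v_2((q-1)/2)$, again contradicting $(\ast)$. Only $\alpha = v_2(q-1)$ survives, and in that case 2.2(iii) gives $w_q(-p^{y_2}) = v_2((q-1)/2)$, so $w_q(2^{x_1}) = v_2((q-1)/2)$, as claimed.

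The main obstacle---though minor---is the middle case: one must track carefully that $\alpha = v_2((q-1)/2)$ implies $v_2(i_q(p)) = v_2((q-1)/2)$ (the min in the definition of $w_q$ is attained by the index), so that $v_2(y_1)+v_2(i_q(p)) \ge v_2(q-1)$ pushes $\beta$ up to $v_2(q-1)$ and triggers the parity flip in 2.2(iii). Once this is verified, all remaining bookkeeping is routine, and the three-case analysis closes the argument.
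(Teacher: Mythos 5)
Your proof is correct and follows essentially the same route as the paper: reduce (1.2) and (1.3) modulo $q$, use $v_2(x_1)=v_2(x_2)$ to get $w_q(2^{x_1})=w_q(2^{x_2})$, and rule out two of three cases via Observations 1--3 and the opposite parities of $y_1$ and $y_2$. The only cosmetic difference is that you anchor the trichotomy on $w_q(p)$ and compute explicitly with indices, where the paper splits on $w_q(2^{x_1})$ versus $v_2((q-1)/2)$ and invokes Observation 3 abstractly.
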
 

\begin{proof} 
We consider the solutions $(x_1, y_1,z_1)$ and $(x_2, y_2, z_2)$ modulo $q$.  

If $w_q(2^{x_1}) < v_2((q-1)/2)$, then, by Observation 2, $w_q(p^{y_1}) = w_q(2^{x_1})$.  But then also, since $v_2(x_2) = v_2(x_1)$, we have $w_q(2^{x_2}) = w_q( 2^{x_1}) = w_q(p^{y_1})$ and, by Observation 2, $w_q(2^{x_2}) = w_q(p^{y_2})$ so that $w_q(p^{y_1}) = w_q(p^{y_2})$.  But this is impossible by Observation 3 since $2 \nmid y_1 - y_2$ and $w_q(2^{x_1}) = w_q(p^{y_1}) = w_q(p^{y_2}) < v_2((q-1)/2)$.  

Similarly, if $w_q(2^{x_1}) > v_2((q-1)/2)$, then, by Observation 2, $w_q(p^{y_1}) = v_2((q-1)/2)$, and, since $w_q(2^{x_1}) = w_q(2^{x_2})$, also $w_q(p^{y_2})= v_2((q-1)/2)$, which is impossible by Observation 3 since $2 \nmid y_1 - y_2$.  

So we must have (4.1).
\end{proof}

\begin{proof}[Proof of Proposition 4.1]
Assume (1.1) has more than one solution with $p \equiv 5 \bmod 8$, $q \equiv 1 \bmod 8$.  
By Theorem 1.2, we can assume that $ 5 \nmid pq$.  Considering congruences modulo 8, we find $2^{x_2} = 4$.  Since 2 is a quadratic nonresidue of $p$, we have $w_p(2)=0$, so that, by Observation 1, we have $1 = w_p(4) = w_p(2^{x_2}) = w_p(q^{z_2}) = w_p(q) = w_p(2^{x_1})$, so that, by Observation 3, 
$$v_2(x_1) = v_2(x_2) = 1, \eqno{(4.2)} $$
noting that $w_p(q^{z_2}) = w_p(q)$ since $z_2$ is odd.  So we can apply Lemma 4.2 to obtain (4.1).  

From (4.2) we have $v_2(x_1) = 1$, so that $2^{x_1} \equiv 4 \bmod 5$.  Since $5 \nmid pq$ and $2 \mid y_1$, we must have $p^{y_1} \equiv 4 \bmod 5$.  So $2 \parallel y_1$, giving 
$$ p^{y_1} \equiv 9 \bmod 16.  \eqno{(4.3)}$$
Since $q \equiv 1 \bmod 8$, $2^{x_1} > 4$, which, along with (4.2), gives 
$$ 2 \parallel x_1 > 2. \eqno{(4.4)}$$
Considering congruences modulo 16 and using (1.2), (4.3), and (4.4), we see that $2^3 \parallel q-1$, so that $v_2( (q-1)/2 )=2$, and (4.1) becomes
$$ w_q(2^{x_1}) = v_2((q-1)/2) =2. \eqno{(4.5)}$$ 
 
From (4.4) we see that $q = p^{y_1} + 2^{x_1}$ is of the form $a^2 + 64 b^2$, so, by Lemma 2.3, $2$ is a quartic residue modulo $q$, so that $4 \mid i_q(2)$ for any choice of primitive root $d$.  
Thus $8 \mid i_q(2^{x_1})$ so that 
$$w_q(2^{x_1}) = \min(v_2(i_q(2^{x_1})), v_2(q-1)) = 3,$$
contradicting (4.5), proving Proposition 4.1.  
\end{proof}

\section{Proofs of Theorem 1.3 and Theorem 1.4}  

\begin{proof}[Proof of Theorem 1.3]
From Theorem 1.2, Proposition 3.1, and Proposition 4.1, we have 
$$ (p,q) \equiv (1,17) \bmod 24. \eqno{(5.1)}$$
We prove (i), (ii), and (iii) separately.

(i): \quad  
From (1.2) and (1.3) we have 
$$ 2^{x_1} + (p^{y_1} - 1) = (q-1), 2 \mid x_1, 2 \mid y_1,  $$
and
$$ 2^{x_2} + ( p^{y_2} - 1) = (q^{z_2} - 1), 2 \mid x_2, 2 \nmid y_2, 2 \nmid z_2. $$
If $v_2(p-1) > v_2(q-1)$, then we see that $x_1= v_2(q-1) = v_2(q^{z_2} - 1) = x_2$, contradicting Lemma 2.1, so that
$$v_2(p-1) \le v_2(q-1).  \eqno{(5.2)} $$
So now we have either  
$$ v_2(p-1) = v_2(q-1) = x_1 \eqno{(5.3)}$$
or  
$$ v_2(q-1) > v_2(p-1) = x_2.  \eqno{(5.4)}$$
Since $2 \mid x_1$ and $2 \mid x_2$, from (5.3) and (5.4) we have  
$$  2 \mid v_2(p-1). \eqno{(5.5)}$$
From (5.1) and (5.5) we have
$$ v_2(p-1) \ge 4, \eqno{(5.6)}$$
which, in combination with (5.1) and (5.2), gives
$$ p \equiv 1 \bmod 48, q \equiv 17 \bmod 48. \eqno{(5.7)}$$
 (5.7), (5.5), and (5.2) give (i) of Theorem 1.3.

(ii): \quad 
We use the notation of Sections 1 and 4.  Using Observation 1 of Section 4 and noting that $z_2$ is odd, we find
$$ w_p(2^{x_1}) = w_p(q) = w_p(q^{z_2}) = w_p(2^{x_2}).$$
If $u_p(q) $ is even, then $w_p(q) \le v_2(\frac{p-1}{2})$, so that $w_p(2^{x_1}) = w_p(2^{x_2}) \le v_2( \frac{p-1}{2})$, so that, by Observation 3 of Section 4, 
$$v_2(x_1) = v_2(x_2). \eqno{(5.8)}$$
So we can apply Lemma 4.2 to find 
$$ w_q(2^{x_1}) = v_2(\frac{q-1}{2}).  $$
So also, by (5.8), $w_q(2^{x_2}) = v_2(\frac{q-1}{2})$, so that by (ii) of Observation 2 of Section 4, $w_q(p^{y_2} ) = w_q(p) = v_2(p-1)$, so that $u_q(p)$ is odd, giving (ii) of Theorem 1.3.    

(iii): \quad 
Assume that we do not have $v_2(p-1) = v_2(q-1)=4$.  We can also assume we do not have $v_2(p-1) = v_2(q-1) = 6$ since then (1.2) becomes $2^6 + p^{y_1} = q$ so that, by Lemmas 2.3 and 2.4, $w_q(2) = 2$ and $w_q(2^{x_1}) = w_q(2)+v_2(6) = 3$; applying Observation 2(i) to equations (1.2) and (1.3) and noting that $v_2(y_1) > v_2(y_2)$, we find $3 = w_q(p^{y_1}) > w_q(p^{y_2}) = w_q(2^{x_2})$, requiring $ 2 \nmid x_2$, contradicting (1.3).  

So now, if $v_2(p-1) = v_2(q-1)$, $x_1 \ge 8$ (by (5.5)).  And if $v_2(p-1) < v_2(q-1)$, then $x_2 = v_2(p-1)$ and $x_1 > v_2(p-1)$, so that, by (5.5) and (5.6), $x_1 \ge 8$, unless $x_2=4$ and $x_1 = 6$ which is an impossible case by Lemmas 2.2 and 2.5.  So $x_1 \ge 8$, so that $q$ is of the form $a^2 + 256 b^2$.  By (5.7), $q \equiv 1 \bmod 16$. So (iii) of Theorem 1.3 follows from Lemma 2.4.  
\end{proof}

\begin{proof}[Proof of Theorem 1.4]
By Corollary 1.6 of \cite{LeSt}, $z_2 > 1$ in (1.3).  So, using Lemma 2.5, we have 
$$ 2^{27} < \frac{10^{9}}{4} < \frac{q^{1/2}}{4}, 2^{87} < \frac{10^{27}}{4} <  \frac{q^{3/2}}{4} \le \frac{q^{z_2/2}}{4}, $$
so that Lemma 2.2 applies to give (i) of Theorem 1.4.  So (5.3) and (5.4) give (ii) of Theorem 1.4.  
\end{proof}

\section{Unlikelihood of Equation (1.3)}  

In this section we consider the equation (1.3) in the context of the $abc$ conjecture.  Let $a$, $b$, and $c$ be positive integers such that $a+b=c$; define $Q = Q(a,b,c)$ as 
$$ Q = \frac{\log(c)}{\log(\rad(abc))}$$
where $\rad(m)$ is the product of all distinct primes dividing $m$. Then for (1.3) we have 
\begin{align*} 
Q =  \frac{z_2 \log(q)}{\log(2)+ \log(p) + \log(q)}  & \ge \frac{3 \log(q) }{ (3/2) \log(q)+ \log(2)}  \\
 & =   2 - \frac{ 2 \log(2)}{(3/2) \log(q) + \log(2)} > 1.97  
\end{align*}
by Lemma 2.5.  

The highest value for $Q$ found in recent researches on the $abc$ conjecture is $Q = 1.62991$ for $(a,b,c) = (2, 3^{10} \cdot 109, 23^5)$.  
If $z_2 > 3$, then we have $Q > 3.29$: if a conjecture of Tenenbaum (quoted in Section B19 of \cite{Guy}) is true, then $Q = 3.29$ is impossible, so that $z_2=3$.  

If $y_2>1$, then $\min(x_2, y_2, z_2) > 2$, so that (1.3) contradicts the familiar Beal conjecture (see Section B19 of \cite{Guy}).  If we assume the Beal conjecture is true, then we can eliminate the case $v_2(p-1) < v_2(q-1)$ from consideration, since this case requires $y_2 > y_1 \ge 2$ (if $v_2(p-1) < v_2(q-1)$, then $x_2 = v_2(p^{y_2}-1) < \min(v_2(p^{y_1}-1), v_2(q-1)) \le x_1$, so that, since $q^{z_2} > q^{z_1} = q$, we must have $y_2>y_1 \ge 2$).

If $y_2 = 1$, we can assume $v_2(p-1) = v_2(q-1)$.  Writing (1.3) as  
$$ 2^{x_2} - q^{z_2} = (2^{x_2/2})^2 - (q^{(z_2-1)/2})^2 q = -p, |-p| < \sqrt{q},$$
we see that $\frac{2^{x_2/2}}{q^{(z_2-1)/2}}$ must be a convergent of the continued fraction expansion of $\sqrt{q}$. If it can be shown that $y_2 = 1$ is impossible, then the conjecture at the beginning of this paper would follow from the Beal conjecture.

\end{document}